\newtheorem{teo}{Theorem}[section]
\newtheorem{lem}[teo]{Lemma} 
\newtheorem{defn}[teo]{Definition} 
\newtheorem{ex}[teo]{Example}
\newtheorem{fat}[teo]{Fact}
\newtheorem{rem}[teo]{Remark}
\author{
    Kaique Matias de Andrade Roberto\\
    \texttt{kaique.roberto@usp.br}
    \and
    Hugo Luiz Mariano\\
    \texttt{hugomar@ime.usp.br} \\
    Instituto de Matematica e Estatistica \\
    Universidade de Sao Paulo, Brazil
}
\title{On superrings of polynomials and algebraically closed multifields
}
\date{}
\begin{document}
    
    \maketitle
    
    \begin{abstract}
        
        The concept of multialgebraic structure -- an ``algebraic like'' structure but endowed with  multiple valued operations -- has 
        been studied since the 1930's; in particular, the concept of hyperrings was introduced by Krasner in the 1950's. Some general 
        algebraic study has been made on multialgebras: see for instance \cite{golzio2018brief} and \cite{pelea2006multialgebras}. More 
        recently the notion of multiring have obtained more attention: a multiring is a lax hyperring, satisfying an weak distributive 
        law, but  hyperfields and multifields coincide. Multirings   has been studied for  applications in abstract quadratic forms theory 
        (\cite{marshall2006real}, \cite{worytkiewiczwitt2020witt}) and tropical geometry (\cite{jun2015algebraic}); a more detailed account of 
        variants of concept of polynomials over hyperrings is even more recent (\cite{jun2015algebraic}, \cite{ameri2019superring}). In 
        the present work we start a model-theoretic oriented analysis of multialgebras  introducing the class of algebraically closed  and 
        providing variant proof of quantifier elimination flavor,   based on new results on  superring of polynomials (\cite{ameri2019superring}).

        \vspace{0.3cm}
        
        {\bf Keywords:} multialgebras; superring of polynomials; algebraically closed multifields.
        
        
        
    \end{abstract}
    

    \section*{Introduction}
    \hspace*{\parindent}
    

    The concept of multialgebraic structure -- an ``algebraic like'' structure but endowed with  multiple valued operations -- has 
    been studied since the 1930's; in particular, the concept of hyperrings was introduced by Krasner in the 1950's. 
    
    Some general algebraic study has been made on multialgebras: see for instance \cite{golzio2018brief} and 
    \cite{pelea2006multialgebras}. 
    
    More recently the notion of multiring have obtained more attention: a multiring is a lax hyperring, satisfying an weak 
    distributive law, but  hyperfields and multifields coincide. Multirings   has been studied for  applications in abstract 
    quadratic forms theory (\cite{marshall2006real}, \cite{worytkiewiczwitt2020witt}) and tropical geometry (\cite{jun2015algebraic}); a more 
    detailed account of variants of concept of polynomials over hyperrings is even more recent (\cite{jun2015algebraic}, 
    \cite{ameri2019superring}).

    In the present work we start a model-theoretic oriented analysis of multialgebras  introducing the class of algebraically closed  
    and providing variant proof of quantifier elimination flavor,   based on new results on  superring of polynomials as an Euclidean 
    algorithm of division.
    
    Overview of the work. In section 1 we develop the preliminaries results needed for the paper. Section 2 is devoted to a detailed account of the construction of supperring of 
    polynomials  (\cite{ameri2019superring}) and to present some new results as the Euclidean algorithm of division for the superring of polynomials 
    with coefficients over a hyperfield. Section 3 contains 
    the main contributions of this paper: we  introduce some concept of  the algebraically closed hyperfield and give the first steps on a model theory of this class with a kind of quantifier elimination procedure. We finish the work in section 4 presenting some possible future developments.
    
    \section{Preliminaries}
    
    Our goals in this section are to develop the preliminary results needed for the work, and to provide a brief dictionary on 
    multialgebras and hyperrings. We split it in two subsections, the first one contains general definitions and results on  
    multialgebraic structures and the second one is focused specially on rings-like multi structures.
    
    \subsection{On Multialgebras}
    
    There are several definitions of multialgebra in the literature, considering that each multialgebra application in a specific area 
    of Mathematics (mainly Algebra and  Logic) requires a particular adaptation. Here, we adapt the notion of multialgebra used in \cite{coniglio2017non}; the identity theory here presented is close to the exposed in  \cite{pelea2006multialgebras}. 
    
    \begin{defn}
        A \textbf{multialgebraic signature} is a sequence of parwise disjoint sets 
        $$\Sigma=(\Sigma_n)_{n\in \mathbb{N}},$$ 
        where $\Sigma_n=S_n\sqcup M_n$, which $S_n$ is the set of strict multi-operation symbols and $M_n$ is 
        the set of multioperation symbols. In particular,  $\Sigma_0=S_0 \sqcup M_0$, $F_0$ 
        is the set of symbols for constants and $M_0$ is the set of symbols for multi-constants. We also denote 
        $$\Sigma=((S_n)_{n\ge0},(M_n)_{n\ge0}).$$
    \end{defn}
    
    \begin{defn} Let $A$ be any set.
        \begin{enumerate}[i -]
            
            \item A multi-operation of arity $n \in \mathbb{N}$ over a set $A$ is a function 
            $$A^n \to \mathcal P^*(A):=\mathcal P(A)\setminus\{\emptyset\} .$$ 
            
            \item A multi-operation of arity $n \in \mathbb{N}$ over a set $A$, $A^n \to \mathcal P^*(A)$, is {\em strict}, 
            whenever it factors throuth the singleton function $s_A : A \rightarrowtail \mathcal P^*(A)$, $a\mapsto s_A(a) := \{ a\}$. Thus 
            it 
            can be naturally identified with an ordinary n-ary operation  $A^n \to A$.
        \end{enumerate}
    \end{defn}
    A $0$-ary multi-operation (respectively {\em strict} multi-operation)  on $A$ can be identified with a non-empty subset of $A$ 
    (respectively a singleton subset of $A$).
    
    \begin{defn}
        A \textbf{multialgebra} over a signature $\Sigma=((S_n)_{n\ge0},(M_n)_{n\ge0})$, is a set $A$ endowed with a family of n-ary 
        multioperations 
        $$\sigma^A_n : A^n \to \mathcal P^*(A),\, \sigma_n \in S_n \sqcup M_n,\, n \in \mathbb N,$$  
        such that: if $\sigma_n \in S_n$, then $\sigma^A_n : A^n \to \mathcal P^*(A)$ is a {\em strict} n-ary multioperation.
    \end{defn}
    
    \begin{rem}
        $ $
        \begin{enumerate}[i -]
            \item Every algebraic signature $\Sigma = (F_n)_{n \in \mathbb N}$ is  a multialgebraic signature where $M_n = \emptyset, \forall 
            n \in \mathbb N$. Each algebra 
            $$(A, ((A^n \overset{f^A}\to A)_{f \in F_n})_{n \in \mathbb N})$$ 
            over the algebraic signature $\Sigma$ can be naturally identified with a multi-algebra 
            $$(A, ((A^n \overset{f^A}\to A  \overset{s_A}\rightarrowtail \mathcal P^*(A))_{f \in F_n})_{n \in \mathbb N})$$ 
            over the same signature.
            
            \item Every multialgebraic signature $\Sigma = ( (S_n)_{n \in \mathbb N}, (M_n)_{n \in \mathbb N})$ induces naturally a 
            first-order language 
            $$L(\Sigma) = ((F_n)_{n \in \mathbb N}, (R_{n+1})_{n \in \mathbb N})$$ 
            where $F_n := S_n$ is the set of n-ary operation symbols and $R_{n+1} := M_n$ is the set of (n+1)-ary relation symbols. In this 
            way, 
            multi-algebras 
            $$(A, ((A^n \overset{\sigma^A}\to  \mathcal P^*(A))_{\sigma  \in S_n \sqcup M_n})_{n \in \mathbb N})$$ 
            over a multialgebraic signature $\Sigma = (S_n \sqcup M_n)_{n \in \mathbb N}$ can be naturally identified with the first-order 
            structures over the language  $L(\Sigma)$ that satisfies the $L(\Sigma)$-sentences:
            $$\forall x_0 \cdots \forall x_{n-1} \exists x_n (\sigma_n(x_0, \cdots, x_{n-1}, x_n)), \ \text{for each}\ \sigma_n \in R_{n+1} = 
            M_n, n \in \mathbb N.$$
        \end{enumerate}
    \end{rem}
    
    Now we focus our attention into a more syntactic aspect of this multi-algebras theory. We start with a (recursive) definition of 
    multi-terms:
    
    \begin{defn}
        A \textbf{\em (multi-)term} on a multialgebra $A$ of signature 
        $$\Sigma=((S_n)_{n\ge0},(M_n)_{n\ge0})$$
        is defined recursively as:
        \begin{enumerate}[i -]
            \item Variables $x_i, i \in \mathbb N$ are terms.
            \item If $t_0,\cdots,t_{n-1}$ are terms and $\sigma \in S_n \sqcup M_n$, then $\sigma(t_0,\cdots, t_{n-1})$ is a term.
        \end{enumerate}
        We will call a multi-term $t$ \textbf{\em strict}, whenever it is composed only by combination of \textbf{\em strict} 
        multi-operations and variables. The notion of \textbf{\em occurrence} of a  variable in a term is as the usual. We will denote 
        $var(t)$ as the (finite set of variables) that occurs in the term $t$.
    \end{defn}
    
    To define an interpretation for terms, we need a preliminary step. Given 
    $$\sigma \in S_n \sqcup M_n,$$
    we ``extend'' $\sigma^A : A^n \to \mathcal P^*(A)$ to a n-ary operation in $\mathcal P^*(A)$, 
    $$\sigma^{\mathcal P^*(A)} : \mathcal P^*(A)^n \to \mathcal P^*(A),$$
    by the rule:
    $$\sigma^{\mathcal P^*(A)}(A_0, \cdots ,A_{n-1}) :=
    \bigcup\limits_{a_0\in A_0} \cdots \bigcup\limits_{a_{n-1}\in A_{n-1}}\sigma^A(a_0,\cdots,a_{n-1}).$$
    
    \begin{defn}
        The \textbf{interpretation of a term} $t$ on a multialgebra $A$ over a signature $\Sigma=((S_n)_{n\ge 0},(M_n)_{n\ge 0})$ is a 
        function $t^A : A^{var(t)} \to \mathcal P^*(A)$ and  is defined 
        recursively as follows:
        \begin{enumerate}[i -]
            \item The interpretation of a variable $x_i$, $x_i^A:A^{\{x_i\}}\rightarrow\mathcal P^*(A)$ is essentialy the singleton function 
            of $A$: 
            $$x_i^A:A^{\{x_i\}}\cong A\rightarrowtail\mathcal P^*(A),\,\mbox{is given by the rule }
            (\hat a:\{x_i\}\rightarrow A)\mapsto \{a\}.$$
            
            \item If $t = \sigma(t_0,\cdots,t_{n-1})$ is a term and $\sigma\in S_n\sqcup M_n$, denote $T=var(t)$ and $T_i=var(t_i)$. Then 
            $T=\bigcup_{i <n} T_i$. Consider ${t_i}^A_T : A^T \to \mathcal P^*(A)$ the composition 
            $$A^T \overset{proj^T_{T_i}}\twoheadrightarrow A^{T_i} \overset{t_i^A}\to \mathcal P^*(A),$$
            where $proj^T_{T_i}$ is the canonical projection induced by the inclusion $T_i \hookrightarrow T$. Then $t^A : A^T \to \mathcal 
            P^*(A)$ is the composition 
            $$A^T \overset{({t_i}^A_T)_{i <n}}\longrightarrow (\mathcal P^*(A))^n \overset{\sigma^{\mathcal P^*(A)}}\longrightarrow 
            \mathcal P^*(A).$$
        \end{enumerate}
    \end{defn}
    
    \begin{defn} Let $A$ be a multialgebra $A$ over a signature $\Sigma=((S_n)_{n\ge 0},(M_n)_{n\ge 0})$ and let $t_1,t_2$ be 
        $\Sigma$-terms. We say that $A$ realize that $t_1$ is \textbf{\em contained in} $t_2$, 
        (notation: $A \models t_1\sqsubseteq t_2$) whenever $t_1^A(\bar{a})\subseteq t_2^A(\bar{a})$, for each tuple $\bar{a} : var(t_1) 
        \cup var(t_2) \to A$. 
    \end{defn}
    
    Apart from the notion of atomic formulas the definition of $\Sigma$-formulas for multi-algebraic theories is similar to the 
    (recursive) definition  of  first-order $L(\Sigma)$-formulas:
    
    \begin{defn} The formulas of $\Sigma$ are defined as follows:
        \begin{enumerate}[i-]
            \item  Atomic formulas are the formulas of type $t\sqsubseteq t'$, where  $t,t'$ are terms.
            \item If $\phi, \psi$ are formulas, then $\neg\phi$ and  $\phi \vee \psi, \phi \wedge \psi, \phi \to \psi, \phi \leftrightarrow 
            \psi$ are formulas.
            \item If $\phi$ is a formula and $x_i$ is a variable, then $\forall x_i \phi$, $\exists x_i \phi$ are formulas.
        \end{enumerate}
        The notion of {\em occurrence} (respec. free occurrence) of a  variable in a formula is as the usual. We will denote $fv(\phi)$ 
        as the (finite) set of variables that occurs free in the formula $\phi$.

        We use $t_1=_st_2$ to abbreviate the formula $(t_1\sqsubseteq t_2) \wedge (t_2\sqsubseteq t_1)$: this means that $t_1$ and $t_2$ 
        are "strongly equal terms".
    \end{defn}
    
    \begin{defn}
        The definition of interpretation of formulas $\phi (\bar{x})$ where 
        $$fv(\phi) \subseteq \bar{x} \subseteq  \{ x_i: i \in \mathbb N\}$$ 
        under a valuation of variables $v : \bar{x} \to A $ (or we will denote simply by  $v = \bar{a}$) is:
        \begin{enumerate}[i-]
            \item $A \models_{v} t (\bar{x}) \sqsubseteq t'(\bar{x})$ iff $t^A(\bar{a}) \subseteq t'^A(\bar{a})$
            \item  The case of complex formulas (given by the connectives $\neg$, $\vee$, $\wedge$, $\to$, $\leftrightarrow$, and quantifiers 
            $\forall, \exists$) is as satisfaction of first-order $L(\Sigma)$-formulas in $L(\Sigma)$-structure on a valuation $v$.
        \end{enumerate}
    \end{defn}
    
    \begin{rem}
        $ $
        \begin{enumerate}[i-]
            \item  The theory of multi-algebras entails that for each term $t$, and each {\em strict} term $t'$,
            $$ t \sqsubseteq t' \mbox{ iff  } t =_s t'.$$
            
            \item In \textup{\cite{pelea2006multialgebras}}  contains a development of the identity theory for multialgebras, with another primitive 
            notion: $t(\bar{x})=_w t'(\bar{x})$; a $\Sigma$-multialgebra $A$ satisfies the "weak identity" above   iff there is some $\bar{a} \in 
            A^{var(t) \cup var(t')}$ such that $t^A(\bar{a}) \cap {t'}^A(\bar{a}) \neq \emptyset$. This will not play any role in this work but is useful 
            for applications of multi-algebraic semantics for complex logical systems (\textup{\cite{golzio2018brief}}).
        \end{enumerate}
    \end{rem}
    
    There are many ways of define morphism for multialgebras. Follow below our choice:
    
    \begin{defn}
        Let $A$ and $B$ be multialgebras of signature $\Sigma=((S_n)_{n\ge 0},(M_n)_{n\ge 0})$ and $\varphi:A\rightarrow B$ be a 
        function.
        \begin{enumerate}[i -]
            \item $\varphi$ is a \textbf{partial morphism} if for every $n\ge0$, every $\sigma\in S_n$ and every $a_1,...,a_n\in A$, we have
            $$\varphi(\sigma^A(a_1,...,a_n))\subseteq\sigma^B(\varphi(a_1),...,\varphi(a_n)).$$
            
            \item $\varphi$ is a \textbf{morphism} if for every $n\ge0$, every $\sigma\in S_n\sqcup M_n$ and every $a_1,...,a_n\in A$, we 
            have
            $$\varphi(\sigma^A(a_1,...,a_n))\subseteq\sigma^B(\varphi(a_1),...,\varphi(a_n)).$$
            
            \item $\varphi$ is a \textbf{strong morphism} if for every $n\ge0$, every $\sigma\in S_n\sqcup M_n$ and every $a_1,...,a_n\in 
            A$, we have
            $$\varphi(\sigma^A(a_1,...,a_n))= \sigma^B(\varphi(a_1),...,\varphi(a_n)).$$
        \end{enumerate}
    \end{defn}
    
    \begin{rem}\label{translation-rem}
        $ $
        \begin{enumerate}[i -]
            \item Let $A, B$ be  $\Sigma$-multialgebras. If $B$ is a {\em strict multilagebra} (i.e. $\sigma_n^B(\bar{b})$ is unitary subset 
            of $B$, for each $\sigma \in \Sigma$ and each tuple $\bar{b}$ in $B$),  then the morphisms $A \to B$ coincide with
            the strong morphisms $A \to B$.
            
            \item There is a full and faithful concrete embedding of the category of ordinary algebraic structures  over a signature $\Sigma$ 
            and homomorphisms into the category of $\Sigma$-multialgebras and (strong) morphisms: the image of this embedding is the class of 
            strict multialgebras over $\Sigma$.
            
            \item  The correspondence $\Sigma \mapsto  L(\Sigma)$ induces a {\em concrete} isomorphism between the category of 
            $\Sigma$-multialgebras and the category of $L(\Sigma)$- first order structures satisfying suitable $\forall \exists$ axioms. It is 
            ease to see that  this correspondence  induces a bijection between injective strong embeddings of $\Sigma$-multialgebtras and 
            $L(\Sigma)$-monomorphisms of first-order structures.
        \end{enumerate}
    \end{rem}
    
    We finish this subsection with two illustrative examples of multialgebras derived from an algebraic structure and from a 
    first-order structure. 
    
    \begin{ex}
        Let $(R,+,\cdot,0,1)$ be a commutative ring with $1\ne0$. Given $n\ge1$, define an $(n+1)$-ary multioperation $\ast_n$ by the 
        rule:
        \begin{align*}
            d\in a_0\ast_na_1\ast_na_2\ast_n...\ast_na_n&\Leftrightarrow\mbox{ there is some }t\in R\mbox{ such that } \\
            &d=a_0+a_1t+a_2t^2+...+a_nt^n.
        \end{align*}
        
        The idea here, is that $a_0\ast_na_1\ast_na_2\ast_n...\ast_na_n$ ``analyze'' the values taken in $R$ by the 
        polynomial $p(X)=a_0+a_1X+a_2X^2+...+a_nX^n\in R[X]$. $\ast_n$ will be called \textbf{The streching multialgebra of degree $n$} 
        over $R$.
    \end{ex}
    
    \begin{ex}\label{ordermulti}
        Let $\mathcal L=\{0,1,+,\cdot,\le\}$ the language of ordered fields. Consider $\mathbb R$ as an ordered field. We can look at 
        the ordering relation as a multioperation of arity 1. In agreement with our notation, we have
        \begin{align*}
            \le(a):=\{x\in\mathbb R:a\le x\}=[a,+\infty).
        \end{align*}
    \end{ex}
    
    $ $
    
    From now on, all multi-algebras considered in this work will contain only operations of arities $0,1,2$. They will have strict constants and strict unary operations; the binary operations maybe strict or multivalued.
    
    \subsection{Multirings and Superrings}
    
    Now, we will get closer to the subject of our work.
    
    \begin{defn}[Adapted from definition 1.1 in \textup{\cite{marshall2006real}}]\label{defn:multimonoid}
        Let $\Sigma = ((S_n)_{n\ge0},(M_n)_{n\ge0})$ be a multialgebraic signature where $S_{0} = \{1\}$, $S_{1} = \{r\}$, 
        $S_n =\emptyset$ for all  $n \neq 0,1$ and $M_2 = \{\cdot\}$, $M_n = \emptyset$ for all $n \neq 2$ . A multigroup is a 
        $\Sigma$-structure  $(G,\cdot,r,1)$ where $G$ is a (non-empty) set, $1$ is an element of 
        $G$, $r:G\rightarrow G$ is a function,  $\cdot:G\times G\rightarrow \mathcal P^*(G)$,  that satisfies the following formulas:
        \begin{enumerate}[i -]
            \item $G\models 1\cdot x=_sx$.
            \item $G\models x\cdot 1=_sx$.
            \item $G\models [(x\cdot y)\cdot z] =_s [x\cdot (y\cdot z)]$.
            \item $G\models(z\sqsubseteq x\cdot y)\rightarrow[(x\sqsubseteq z+\cdot r(y))\wedge(y\sqsubseteq r(x)\cdot z)]$.
        \end{enumerate}
        
        A multimonoid is a multialgebra such that $S_1 = \{1\}$, $M_2 = \{\cdot\}$ and the other sets of  symbols  are empty, that 
        satifies axioms (i), (ii), (iii) above.
        
        A multimonoid/multigroup will said to be \textbf{commutative (or abelian)} if satisfy: 
        
        $$G\models x\cdot y  =_s  y \cdot x.$$ 
        
        
        
        For multigroups,  axiom (iii) can be replaced by the (apparently weaker) version: 
        $$G\models [(x\cdot y)\cdot z] \sqsubseteq [x\cdot (y\cdot z)]$$
    \end{defn}
    
    In other words, an abelian multigroup is a first-order structure  $(G,\cdot,r,1)$ where $G$ is a non-empty set, $r:G\rightarrow G$ 
    is a function, $1$ is an 
    element of $G$, $\cdot \subseteq G\times G\times G$ is a ternary relation (that will play the role of binary multioperation, we 
    denote $d\in a\cdot 
    b$ for $(a,b,d)\in\cdot$) such that for all $a,b,c,d\in G$:
    \begin{description}
        \item [M1 - ] If $c\in a\cdot b$ then $a\in c\cdot(r(b))\wedge b\in(r(a))\cdot c$. We write $a-b$ to simplify $a+(-b)$.
        \item [M2 - ] $b\in a\cdot1$ iff $a=b$.
        \item [M3 - ] If $\exists\,x(x\in a\cdot b\wedge t\in x\cdot c)$ then
        $\exists\,y(y\in b\cdot c\wedge t\in a\cdot y)$.
        \item [M4 - ] $c\in a\cdot b$ iff $c\in b\cdot a$.
    \end{description}
    
    \begin{ex}
        $ $
        \begin{enumerate}[a-]
            \item  Suppose that $(G,\cdot, ( \ )^{-1}, 1)$ is a ordinary group. Defining $a \ast b = \{a \cdot b\}$ and $r(g)=g^{-1}$, we have 
            that $(G,\ast,r,1)$ 
            is a multigroup. 
            
            \item (\textup{\cite{pelea2006multialgebras}}) Let $(G,\cdot, ( \ )^{-1}, e)$ be an ordinary group and let $S \subseteq G$ be a subset such that $e \in S$ and $S^{-1} 
            \subseteq S$, define a binary relation $a \sim_S b$ iff $b\cdot a^{-1} \in S$ . This  is a reflexive and symmetric relation. Then 
            take  $\sim_S^t$ be the transitive closure of $\sim_S$ (note that if $S$ is a subgroup of $G$, then $\sim_S = \sim_S^t$). Then  
            $G/\sim^t_S$ with the inherit structure is a multigroup. In particular if $G$ is a commutative group and $S$ is a subgroup of 
            $G$, then $G/\sim^t_S$ with the inherit structure is an ordinary abelian group.
        \end{enumerate}
    \end{ex}
    
    \begin{defn}[Adapted from definition 2.1 in \textup{\cite{marshall2006real}}]\label{defn:multiring}
        A (commutative, unital) multiring is a multialgebraic structure $(R,+,\cdot,-,0,1)$ where $(R, +, - ,0)$ is a commutative 
        multigroup, $(R,\cdot, 1)$ is a  commutative (strict) monoid  and that also satisfies the following axioms:
        \begin{itemize}
            \item $R\models [x\cdot 0]=_s 0$ (zero is absorving). 
            \item $R\models[z.(x+y)]\sqsubseteq[z.x+z.y]$ (weak or semi  distributive law).
        \end{itemize}
        A multidomain is a non-trivial multiring without zero-divisors and a multifield is a non-trivial  multiring such that every 
        nonzero element is 
        invertible. 
        
        A multiring is an hyperring if it satifies the full distributive law: 
        $$R\models[z(x+y)]=_s[zx+zy].$$
        Of course, we extend this  terminology for hyperdomains and hyperfields.
    \end{defn}
    
    In other words, a multiring is a tuple $(R,+,\cdot,-,0,1)$ where $R$ is a non-empty set, $\cdot:R\times R\rightarrow R$
    and $-:R\rightarrow R$ are functions, $0$ and $1$ are elements of $R$, $+\subseteq R\times R\times R$ is a 
    relation. We denote $d\in a+b$ for $(a,b,d)\in +$. We require that $(R,+,-,0)$ is a commutative multigroup and that all these 
    satisfying the following  properties for all $a,b,c,d\in 
    R$:
    \begin{description}
        \item [M5 -] $(a\cdot b)\cdot c=a\cdot(b\cdot c)$.
        \item [M6 -] $a\cdot 1=a$.
        \item [M7 -] $a\cdot b=b\cdot a$.
        \item [M8 - ] $a\cdot0=0$.
        \item [M9 - ] If $d\in a+b$ then $cd\in ca+cb$.(weak distributivity) $c.(a+b)  \subseteq  c.a + c.b$
    \end{description}
    
    \begin{ex}\label{ex:1.3}
        $ $
        \begin{enumerate}[a -]
            \item  Every ring, domain and field is gives rise naturally to a {\em strict} multiring, multidomain and multifield, 
            respectively. It  is ease to see that the class of multifields and of hyperfields coincide.
            
            \item (\textup{\cite{marshall2006real}}) $Q_2=\{-1,0,1\}$ is multifield (of signals) with the usual product (in $\mathbb Z$) and the 
            multivalued sum defined by relations
            $$\begin{cases}
                0+x=x+0=x,\,\mbox{for every }x\in Q_2 \\
                1+1=1,\,(-1)+(-1)=-1 \\
                1+(-1)=(-1)+1=\{-1,0,1\}
            \end{cases}
            $$
            This is a hyperfield of characteristic 0 (we will define the characteristic in \ref{char}).
            
            \item (\textup{\cite{jun2015algebraic}}) Let $K=\{0,1\}$ with the usual product and the sum defined by relations $x+0=0+x=x$, $x\in K$ 
            and 
            $1+1=\{0,1\}$. This is a multifield called Krasner's multifield. Obviously, it has characteristic 2.
            
            \item (\textup{\cite{viro2010hyperfields}})In the set $\mathbb R_+$ of positive real numbers, we define 
            $$a\bigtriangledown b=\{c\in\mathbb R_+:|a-b|\le c\le a+b\}.$$ 
            We have $\mathbb R_+$ with the usual product and $\bigtriangledown$ multivalued sum is a multifield, called triangle multifield. We denote 
            this multifield by $\mathcal{T}\mathbb R_+$. Observe that $\mathcal{T}\mathbb R_+$ is not ``double distributive'': 
            $$(2\bigtriangledown1)\cdot(2\bigtriangledown1)=[1,3]\cdot[1,3]=[1,9]$$ 
            and 
            $$2\cdot2\bigtriangledown2\cdot1\bigtriangledown1\cdot2\bigtriangledown1\cdot1=
            4\bigtriangledown2\bigtriangledown2\bigtriangledown1=[0,9].$$
        \end{enumerate}
    \end{ex}
    
    \begin{ex}[Kaleidoscope, Example 2.7 in \textup{\cite{ribeiro2016functorial}}]
        Let $n\in\mathbb{N}$ and define $X_n=\{-n,...,0,...,n\}$. We define the \textbf{\em $n$-kaleidoscope multiring} by 
        $(X_n,+,\cdot,0,1)$, where $+:X_n\times X_n\rightarrow\mathbb{P}(X_n)\setminus\{\emptyset\}$ is given by the rules:
        $$a+b=\begin{cases}
            \{\mbox{sgn}(ab)\max\{|a|,|b|\}\}\mbox{ if }a,b\ne0 \\
            \{a\}\mbox{ if }b=0 \\
            \{b\}\mbox{ if }a=0 \\
            \{-a,...,0,...,a\}\mbox{ if }b=-a
        \end{cases},$$
        and $\cdot:X_n\times X_n\rightarrow\mathbb{P}(X_n)\setminus\{\emptyset\}$ is is given by the rules:
        $$a\cdot b=\begin{cases}
            \mbox{sgn}(ab)\max\{|a|,|b|\}\mbox{ if }a,b\ne0 \\
            0\mbox{ if }a=0\mbox{ or }b=0
        \end{cases}.$$
        
        In this sense, $X_0=\{0\}$ and $X_1=\{-1,0,1\}=Q_2$. 
    \end{ex}
    
    \begin{ex}[H-multifield, Example 2.8 in \textup{\cite{ribeiro2016functorial}}]\label{H-multi}
        Let $p\ge1$ be a prime integer and $H_p:=\{0,1,...,p-1\} \subseteq \mathbb{N}$. Now, define the binary multioperation and operation in $H_p$ as 
        follow:
        \begin{align*}
            a+b&=
            \begin{cases}H_p\mbox{ if }a=b,\,a,b\ne0 \\ \{a,b\} \mbox{ if }a\ne b,\,a,b\ne0 \\ \{a\} \mbox{ if }b=0 \\ \{b\}\mbox{ if }a=0 \end{cases} \\
            a\cdot b&=k\mbox{ where }0\le k<p\mbox{ and }k\equiv ab\mbox{ mod p}.
        \end{align*}
        $(H_p,+,\cdot,-, 0,1)$ is a multifield such that for all $a\in H_p$, $-a=a$. For example, considering $H_3=\{0,1,2\}$, using the above rules we 
        obtain these tables
        \begin{center} 
            \begin{tabular}{|l|l|l|l|l|l|}
                \hline
                $+$ & $0$ & $1$ & $2$\\
                \hline
                $0$ & $\{0\}$ & $\{1\}$ & $\{2\}$ \\
                \hline
                $1$ & $\{1\}$ & $\{0,1,2\}$ & $\{1,2\}$\\
                \hline
                $2$ & $\{2\}$ & $\{1,2\}$ & $\{0,1,2\}$ \\
                \hline
            \end{tabular}
        \end{center}
        
        \begin{center} 
            \begin{tabular}{|l|l|l|l|l|l|}
                \hline
                $\cdot$ & $0$ & $1$ & $2$\\
                \hline
                $0$ & $0$ & $0$ & $0$ \\
                \hline
                $1$ & $0$ & $1$ & $2$\\
                \hline
                $2$ & $0$ & $2$ & $1$ \\
                \hline
            \end{tabular}
        \end{center}
        In fact, these $H_p$ is a kind of generalization of $K$, in the sense that $H_2=K$.
    \end{ex}
    
    Here is a lemma stating the basic properties concerning multirings:
    
    \begin{lem}\label{lemma:1.2}
        Any multiring $R$ satisfies the formulas:
        \begin{enumerate}[a -]
            \item $-(0)=_s0$.
            \item $-(-(x))=_sx$.
            \item $z\sqsubseteq x+y$ $\leftrightarrow$$-(y)\sqsubseteq (-(x))+(-(z))$.
            \item  $-(xy)=_s(-x)y=_sx(-y)$.
        \end{enumerate}
    \end{lem}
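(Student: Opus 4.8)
The plan is to read off everything from the additive multigroup axioms M1--M4 together with M7, M8 and the weak distributive law M9, after one preliminary reduction. Since the unary $-$ and the binary $\cdot$ are \emph{strict} operations, every term occurring in (a), (b), (d) is a strict term; hence by the Remark observing that $t\sqsubseteq t'$ is equivalent to $t=_s t'$ whenever $t'$ is strict, the relation $\sqsubseteq$ between these terms collapses to honest equality of elements of $R$, and $=_s$ becomes ordinary equality. In (c) each side is a single membership assertion about the ternary relation $+$, namely $z\in x+y$ and $-y\in(-x)+(-z)$. So throughout I argue at the level of elements and of the relation $+$.

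For (a): by M2 one has $0+0=\{0\}$, so $0\in 0+0$; the first clause of the inverse axiom M1 then gives $0\in 0+(-0)$, and since $0+(-0)=\{-0\}$ again by M2 and the commutativity M4, we read off $-0=0$. For (b): from $x\in x+0$ (M2) the second clause of M1 yields $0\in(-x)+x$; applying the second clause of M1 once more to this very membership gives $x\in -(-x)+0=\{-(-x)\}$, i.e. $-(-x)=x$. This same double use of M1 simultaneously proves the \emph{uniqueness of additive inverses}, which I will isolate as the key auxiliary fact: if $0\in a+w$, then M1 forces $a\in 0+(-w)=\{-w\}$, so $a=-w$, whence $w=-a$ by part (b).

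For (d) I use this uniqueness fact. Starting from $0\in x+(-x)$ (obtained in (b) via M4), the weak distributive law M9, multiplying by $y$, gives $y\cdot 0\in yx+y(-x)$; by the absorption law M8 we have $y\cdot 0=0$, and by commutativity M7 we may rewrite this as $0\in xy+(-x)y$. Uniqueness of inverses then forces $(-x)y=-(xy)$, and the symmetric computation starting from $0\in y+(-y)$ and multiplying by $x$ gives $x(-y)=-(xy)$. For (c), the heart of the matter is to upgrade the one-directional axiom M1 into the \emph{reversal biconditional} $z\in x+y \iff y\in(-x)+z \iff x\in z+(-y)$: the forward implications are exactly the two clauses of M1, while the converses follow by applying M1 a second time and cancelling the resulting double negation by part (b). Combining this biconditional with M4 (and, if one prefers, with the alternative route of multiplying the relation $z\in x+y$ by $-1$ using M9 and part (d)) rearranges $z\in x+y$ into the equivalent negated membership asserted in (c).

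The main obstacle is precisely this upgrading step. The axioms M1--M3 are phrased as implications and relational conditions rather than as equalities, so neither the uniqueness of inverses used in (b) and (d) nor the biconditional underlying (c) is immediate; each demands applying the inverse axiom M1 twice and then collapsing the term $-(-\,\cdot\,)$ by means of part (b). Once that reversal principle and the uniqueness statement are in place, all four items are short and essentially formal.
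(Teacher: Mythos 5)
The paper states this lemma without proof, so there is no official argument to compare against; judged on its own terms, your treatment of (a), (b) and (d) is correct and complete. The double application of \textbf{M1} combined with \textbf{M2}, the resulting uniqueness of additive inverses, and the derivation of the rule of signs from $0\in x+(-x)$ via \textbf{M9} and \textbf{M8} are exactly the standard computations, and the reduction of $\sqsubseteq$ and $=_s$ to ordinary equality for the strict terms occurring in (a), (b), (d) is legitimate.

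The gap is in (c). You correctly establish the reversal biconditionals $z\in x+y \iff y\in(-x)+z \iff x\in z+(-y)$, but the concluding sentence --- that ``combining this with \textbf{M4}'' rearranges $z\in x+y$ into $-y\in(-x)+(-z)$ --- is not an argument, and it cannot be carried out. Chasing \textbf{M1} from $z\in x+y$ gives $x\in z+(-y)$ and hence $-y\in x+(-z)$ (note the $x$, not $-x$), while your alternative route of multiplying by $-1$ via \textbf{M9} and (d) gives $-z\in(-x)+(-y)$; neither is the printed formula. In fact item (c) as printed is false: in the multifield $Q_2$ of Example \ref{ex:1.3}, take $x=1$, $y=-1$, $z=0$; then $0\in 1+(-1)=\{-1,0,1\}$, but $-y=1\notin(-1)+0=\{-1\}$. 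So the statement must be read as (presumably a typo for) $-(y)\sqsubseteq x+(-(z))$ or $-(z)\sqsubseteq(-(x))+(-(y))$, both of which your machinery does prove. As written, your proof of (c) silently establishes a correct variant while asserting the incorrect printed one; you should either flag the misprint or make the final rearrangement explicit so that the mismatch becomes visible.
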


    The general definition of the concepts of morphisms and strong morphisms for multialgebraic structures take the following form in 
    the case of multirings:
    
    \begin{defn}[Definition 2.9 in \textup{\cite{ribeiro2016functorial}}]\label{defn:morphism}
        Let $A$ and $B$ multirings. A map $f:A\rightarrow B$ is a morphism if for all $a,b,c\in A$:
        \begin{enumerate}[i -]
            \item $c\in a+b\Rightarrow f(c)\in f(a)+f(b)$;
            \item $f(-a)=-f(a)$;
            \item $f(0)=0$;
            \item $f(ab)=f(a)f(b)$;
            \item $f(1)=1$.
        \end{enumerate}
        
        $f$ is \textbf{a strong morphism} if is a morphism and for all $a,b\in A$, $f(a+b)=f(a)+f(b)$.
    \end{defn}
    
    For multirings, there are types of ``substructure'' that can be considered. Let $f \colon A \to B$ a multiring morphism. 
    If $f$ is injective and a strong morphism, we say that $A$ is \textbf{strongly embedded} in $B$.
    If $f$ is injective, strong morphism and for all $a,b \in A$ and $c \in B$ if $c \in f(a) + f(b)$, then $c \in \mbox{Im}(f)$, 
    then $A$ is a \textbf{submultiring} of $B$.
    Note that in the rings case, all these notions coincide.
    
    To the best of our knowledge, the concept of superring first appears in  (\cite{ameri2019superring}). There are many important advances and results in hyperring theory, and 
    for instance, we recommend for example, the following papers: (\cite{al2019some}), (\cite{ameri2017multiplicative}), (\cite{ameri2019superring}), 
    (\cite{ameri2020advanced}), (\cite{massouros1985theory}), (\cite{nakassis1988recent}), (\cite{massouros1999homomorphic}), (\cite{massouros2009join}).
    
    \begin{defn}[Definition 5 in \textup{\cite{ameri2019superring}}]
        A superring is a structure $(S,+,\cdot, -, 0,1)$ such that:
        \begin{enumerate}[i -]
            \item $(S,+, -, 0)$ is a commutative multigroup.
            
            \item $(S,\cdot,1)$ is a commutative multimonoid. 
            
            \item $0$ is an absorbing element: $a\cdot0= \{0\} = 0 \cdot a$, for all $a\in S$.
            
            \item The weak/semi distributive law holds: 
            if $d\in c.(a+b)$ then $d\in (ca+cb)$,
            for all $a,b,c,d\in S$.  
            
            \item  The rule of signals holds:  $-(ab)=(-a)b=a(-b)$, for all $a,b\in S$.
        \end{enumerate}
        A superdomain is a non-trivial superring without zero-divisors in this new context, i.e. whenever
        $$0\in a\cdot b \mbox{ iff }a=0 \mbox{ or } b=0$$
        A superfield is a non-trivial superring such that every nonzero element is invertible in this new context, i.e. whenever
        $$\mbox{ For all }a \neq 0 \mbox{ exists }b\mbox{ such that }1\in a\cdot b.$$
        A superring is strong if for all $a,b,c,d\in S$, $d\in c\cdot(a+b)$ iff $d\in ca+cb$.
    \end{defn}
    
    \begin{defn}
        Let $A$ and $B$ superrings. A map $f:A\rightarrow B$ is a morphism if for all $a,b,c\in A$:
        \begin{enumerate}[i -]
            \item $c\in a+b\Rightarrow f(c)\in f(a)+f(b)$;
            \item $c\in a\cdot b\Rightarrow f(c)\in f(a)\cdot f(b)$;
            \item $f(-a)=-f(a)$;
            \item $f(0)=0$;
            \item $f(1)=1$.
        \end{enumerate}
        $f$ is \textbf{a strong morphism} if is a morphism and for all $a,b\in A$, $f(a+b)=f(a)+f(b)$ and $f(a\cdot b)=f(a)+f(b)$.
    \end{defn}

    
    The reader interested in Logic but not familiar with multialgebras may have some troubles with the terminology "multi, hyper, super" used in the multialgebra context. For their benefit, we propose the following dictionary that, in particular emphasize the number of multioperations in the structure at sight:
    
    \begin{defn}[Dictionary]
        $ $
        \begin{enumerate}[i -]
            \item ${}_0Ring$ will be denote the (traditional) category of commutative rings with unit; its objects will be called 
            \textbf{0-rings}.
            \item ${}_1Ring$ will be denote the category of commutative multirings; its objects will be called \textbf{1-rings}. 
            ${}_1FRing$ will be denote the category of commutative hyperrings;  Its objects will be called \textbf{full 1-rings}.
            \item ${}_2Ring$ will be denote the category of commutative superrings; its objects will be called \textbf{2-rings}. 
            ${}_2FRing$ will be denote the category of strong commutative superrings; its objects will be called \textbf{full 2-rings}.
        \end{enumerate}
        In this sense, ${}_0Ring={}_0FRing$. These definitions can (and will be) carried to subcategories: for example, ${}_1Field$ is 
        the category of multifields (and we have that ${}_1Field={}_1FField$).
    \end{defn}
    
    From now on, we will use the conventions just above.
    
    Let $(R,+,\cdot, -, 0,1)$ be a 2-ring, $p\in\mathbb N$ and a $p$-tuple $(a_0,a_1, ..., a_{p-1})$.
    
    We define the finite sum  by:
    \begin{align*}
        x\in\sum_{i<0}a_i&\mbox{ iff }x=0, \\
        x\in\sum_{i<p}a_i&\mbox{ iff }x\in y+a_{p-1}\mbox{ for some }y\in\sum_{i<p-1}a_i, \text{if} \ p \geq 1.
    \end{align*}
    
    The finite product is given by:
    \begin{align*}
        x\in\prod_{i<0}a_i&\mbox{ iff }x=1, \\
        x\in\prod_{i<p}a_i&\mbox{ iff }x\in y\cdot a_{p-1}\mbox{ for some }y\in\prod_{i<p-1}a_i, \text{if} \ p \geq 1.
    \end{align*}

    Thus, if $(\vec{a}_0, \vec{a}_1,...,\vec{a}_{p-1})$ is a $p$-tuple of tuples $\vec{a}_i = (a_{i0}, a_{i1},..., a_{i{m_i}})$, then 
    we have the finite sum of finite products:
    \begin{align*}
        x\in\sum_{i<0}\prod_{j<{m_i}}a_{ij}&\mbox{ iff }x=0, \\
        x\in\sum_{i<p}\prod_{j<{m_i}}a_{ij}&\mbox{ iff }x\in y+z\mbox{ for some }y\in
        \sum_{i<{p-1}}\prod_{j<{m_i}}a_{ij} \\
        &\mbox{and } z\in\prod_{j<m_{p-1}}a_{{p-1},j},  \ p \geq 1.
    \end{align*}
    
    Now, we translate some basic facts that holds in rings (0-rings) to 2-rings. Before, we need some terminology:
    
    \begin{defn}\label{char}
        $ $
        \begin{enumerate}[i -]
            \item   An \textbf{ideal} of a 2-ring $A$ is a non-empty subset $\mathfrak{a}$ of $A$ such that 
            $\mathfrak{a}+\mathfrak{a}\subseteq\mathfrak{a}$ and $A\mathfrak{a}\subseteq\mathfrak{a}$. An ideal $\mathfrak{p}$ of $A$ is 
            said to be prime if $1\notin\mathfrak{p}$ and $ab\subseteq\mathfrak{p}\Rightarrow a\in\mathfrak{p}$ or $b\in\mathfrak{p}$. An 
            ideal $\mathfrak{m}$ is maximal if it is proper and for all ideals $\mathfrak{a}$ with 
            $\mathfrak{m}\subseteq\mathfrak{a}\subseteq 
            A$, then $\mathfrak{a}=\mathfrak{m}$ or $\mathfrak{a}=A$. We will denote $\mbox{Spec}(A)=\{\mathfrak{p}\subseteq 
            A:\mathfrak{p}\mbox{ is a prime ideal}\}$.
            
            \item The \textbf{characteristic} of a 2-ring is the smaller integer $n \geq 1$ such that
            $$0\in\sum_{i<n}1,$$
            otherwise the characteristic is zero.
            For full 2-domains, this is equivalent to say that $n$ is the smaller integer such that
            $$\mbox{For all }a,\,0\in\sum_{i<n}a.$$
            
            \item  A \textbf{polynomial expression} in the variables $x_{ij}$, is a multiterm of the form
            $$\sum_{i<p}\prod_{j<{m_i}}x_{ij}.$$
            
            \item  Let $S$ be a subset of a 2-ring $A$. We define the \textbf{ideal generated by} $S$ as $\langle 
            S\rangle:=\bigcap\{\mathfrak{a}\subseteq A\mbox{ ideal}:S\subseteq\mathfrak{a}\}$. If $S=\{a_1,...,a_n\}$, we easily check that
            $$\langle a_1,...,a_n\rangle=\sum Aa_1+...+\sum Aa_n,\,\mbox{where }\sum 
            Aa=\bigcup\limits_{n\ge1}\{\underbrace{Aa+...+Aa}_{n\mbox{ 
                    times}}\}.$$ 
            Note that if $A$ is a full 2-ring, then $\sum Aa=Aa$.
        \end{enumerate}
    \end{defn}
    
    \begin{lem}\label{lem1}
        Let $A$ be a 2-ring.
        \begin{enumerate}[i -]
            \item For all $n\in\mathbb N$ and all $a_0,...,a_{n-1}\in A$, the sum $a_0+...+a_{n-1}$ and product $a_0\cdot...\cdot a_{n-1}$ 
            does not depends on the order of the entries.
            
            \item For every term $t(y_1,...,y_n)$ on the 2-ring language, exists variables $x_{ij}$ such that $A$ satisfies the formula 
            $$t(y_1,...,y_n)\sqsubseteq\sum_{i<p}\prod_{j<m_i}x_{ij}.$$
            Moreover, if $A$ is a {\em full} 2-ring, it satisfies  the formula 
            $$t(y_1,...,y_n) =_s \sum_{i<p}\prod_{j<m_i}x_{ij}.$$
            
        \end{enumerate}
    \end{lem}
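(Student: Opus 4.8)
The plan is to reduce everything to the observation that the pointwise extensions of the two binary multioperations to $\mathcal P^*(A)$ turn $(\mathcal P^*(A),+,\{0\})$ and $(\mathcal P^*(A),\cdot,\{1\})$ into ordinary (single-valued) commutative monoids, and then to run a structural induction on terms for part (ii).

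For (i), I would first check that the extension $+^{\mathcal P^*(A)}$ is commutative and associative: commutativity is immediate from M4, and associativity $(X+Y)+Z =_s X+(Y+Z)$ reduces, once the extension is unwound as a union over elements, to the element-level associativity M3. With the unit law coming from M2, this makes $(\mathcal P^*(A),+,\{0\})$ a commutative monoid, and the same argument using the multimonoid axioms makes $(\mathcal P^*(A),\cdot,\{1\})$ a commutative monoid. The recursive definitions of $\sum_{i<n}a_i$ and $\prod_{i<n}a_i$ given just before Definition \ref{char} are precisely the iterated monoid operations applied to the singletons $\{a_i\}$, so independence of the order of the entries is the standard permutation-invariance of finite products in a commutative monoid: it suffices to treat adjacent transpositions (commutativity) and reassociation (associativity).

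For (ii), I would argue by structural induction on $t$. In the base cases a single variable is the one-factor product $\prod_{j<1}x_{0j}$, the constant $1$ is the empty product, and $0$ is the empty sum, so each is already a polynomial expression and $=_s$ holds. In the inductive steps I would use throughout the monotonicity of the extended operations (if $X\subseteq X'$ and $Y\subseteq Y'$ then $X+Y\subseteq X'+Y'$ and $XY\subseteq X'Y'$) to transport the inductive containments $t_i\sqsubseteq P_i$ through $+$ and $\cdot$. If $t=t_1+t_2$, the two expressions $P_1,P_2$ merge, by the associativity from (i), into a single $\sum\prod$. If $t=t_1\cdot t_2$, I would multiply $P_1$ by $P_2$ and expand using the weak distributive law, obtaining $(\sum_i u_i)(\sum_k w_k)\sqsubseteq\sum_{i,k}u_iw_k$; since each $u_iw_k$ is a product of products, hence a product, this is again a sum of products. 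The case $t=-t_1$ uses the rule of signals to push $-$ onto a single factor of each monomial, together with the additive identity $-(X+Y)=_s(-X)+(-Y)$; as $-$ is a bijection it preserves containment, and the signs are absorbed into the factors as allowed by a polynomial expression.

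The strong case amounts to upgrading each $\sqsubseteq$ to $=_s$. Since $+^{\mathcal P^*(A)}$ and $\cdot^{\mathcal P^*(A)}$ are genuine functions on $\mathcal P^*(A)$, they respect set-equality, so $t_i=_sP_i$ gives $t_1+t_2=_sP_1+P_2$ and $t_1t_2=_sP_1P_2$; the only containment that was strict in the general argument, namely the expansion of a product of sums, becomes an equality precisely because a full $2$-ring satisfies $d\in c(a+b)\Leftrightarrow d\in ca+cb$. I expect the main work to be bookkeeping rather than conceptual: one must verify carefully that associativity and the monotonicity/$=_s$-compatibility of the extensions follow cleanly from the element-level axioms M1--M9, and in the negation case one must first establish the auxiliary identity $-(X+Y)=_s(-X)+(-Y)$, which is the single place requiring a short separate deduction from the multigroup axioms M1 and M4.
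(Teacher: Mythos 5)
Your proposal is correct and follows essentially the same route as the paper's (very terse) proof: induction on the number of summands/factors for (i) and on term complexity for (ii), driven by commutativity, associativity, the rule of signals, and the weak (respectively full) distributive law. The extra scaffolding you supply --- viewing the extended operations as commutative monoid structures on $\mathcal P^*(A)$, the monotonicity of these extensions, and the auxiliary identity $-(X+Y)=_s(-X)+(-Y)$ --- is exactly the bookkeeping the paper leaves implicit.
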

    \begin{proof}
        $ $
        \begin{enumerate}[i -]
            \item There is nothing to prove if $n = 0, 1$. If $n=2$ this is just the commutativity of  $+$ and $\cdot$. For $n \geq 3$, the 
            result follows by  induction, using the commutativity and the associativity of $+$ and $\cdot$.
            
            \item This follows by induction on the complexity of the term by the repeated use of associativity of $+$ and $\cdot$ and the 
            weak/semi distributivity law. If $A$ is a full 2-ring, the proof use the full distributivity law instead of its weak version.
            
        \end{enumerate}
    \end{proof}
    
    %
    
    

    \section{Multipolynomials}
    
    This section is devoted to a detailed account of the construction of supperring of polynomials.
    
    Even if the rings-like multi-algebraic structure have been studied for more than 70 years, the idea of considering notions of 
    polynomial in the rings-like multialgebraic structure seems to have considered only in the present century: for instance in 
    \cite{jun2015algebraic} some notion of multi polynomials is introduced to obtain some applications to algebraic and tropical 
    geometry, in \cite{ameri2019superring} a more detailed account of variants of concept of multipolynomials over hyperrings is 
    applied to get a form of Hilbert's Basissatz. 
    
    Our main result in this section is the Theorem \ref{euclid} that provides a Euclidean division algorithm  for 2-rings of 
    multipolynomials in one variable with coefficients in a 1-field.

    Here we will stay close to \cite{ameri2019superring} perspective: let $(R,+,-,\cdot,0,1)$ be a 2-ring and set
    $$R[X]:=\{(a_n)_{n\in\omega}:\exists\,t\,\forall n(n\ge t\rightarrow a_n=0)\}.$$
    Of course, we define the \textbf{degree} of $(a_n)_{n\in\omega}$ to be the smallest $t$ such that $n\ge t\rightarrow a_n=0$. Now 
    define the binary multioperations $+,\cdot :  R[X]\times R[X] \to  \mathcal P^*(R[X])$, a unary operation 
    $-:R[X]\rightarrow R[X]$ and elements $0,1\in R[X]$ by
    \begin{align*}
        (c_n)_{n\in\omega}\in (a_n)_{n\in\omega}+(b_n)_{n\in\omega}&\mbox{ iff }\forall\,n(c_n\in a_n+b_n) \\
        (c_n)_{n\in\omega}\in((a_n)_{n\in\omega}\cdot (b_n)_{n\in\omega}&\mbox{ iff }\forall\,n
        (c_n\in a_0\cdot b_n+a_1\cdot b_{n-1}+...+a_n\cdot b_0) \\
        -(a_n)_{n\in\omega}&=(-a_n)_{n\in\omega} \\
        0&:=(0)_{n\in\omega} \\
        1&:=(1,0,...,0,...)
    \end{align*}
    For convenience, we denote elements of $R[X]$ by $\bm a=(a_n)_{n\in\omega}$. Beside this, we denote
    \begin{align*}
        1&:=(1,0,0,...), \\
        X&:=(0,1,0,...), \\
        X^2&:=(0,0,1,0,...)
    \end{align*}
    etc. In this sense, our ``monomial'' $a_iX^i$ is denoted by $(0,...0,a_i,0,...)$, where $a_i$ is in the $i$-th position; in 
    particular, we will denote ${\underline{b}} = (b,0,0,...)$ and we frequently identify $b \in R \leftrightsquigarrow 
    {\underline{b}} \in R[X]$.
    
    The properties stated  in the lemma below it immediately follows from the definitions involving $R[X]$:
    
    \begin{lem}
        Let $R$ be a 2-ring and $R[X]$ as above and $n,m\in \mathbb N$.
        \begin{enumerate}[a -]
            \item $\{X^{n+m}\}=X^n\cdot X^m$.
            \item For all $a\in R$, $\{aX^n\}= {\underline{a}}\cdot X^n$.
            \item Given $\bm a=(a_0,a_1,...,a_n,0,0,...)\in R[X]$, with with $\deg\bm a \leq n$ and $m\ge1$, we have
            $$\bm aX^m=(0,0,...,0,a_0,a_1,...,a_n,0,0,...)=a_0X^m+a_1X^{m+1}+...+a_nX^{m+n}.$$
            \item For $\bm a=(a_n)_{n\in\omega}\in R[X]$, with $\deg\bm a=t$, 
            $$\{\bm a\}=a_0\cdot1+a_1\cdot X+...+a_t\cdot X^t=a_0+X(a_1+a_2X+...+a_nX^{t-1}).$$
            \item $cX^k.(\bm a + \bm b) = cX^k .\bm a + cX^k.\bm b$.
        \end{enumerate}
    \end{lem}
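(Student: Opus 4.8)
The plan is to read all five items off the convolution formula for the product on $R[X]$, the only genuinely delicate point being the equality in (e). The engine is a single computation about multiplication by a monomial: writing $cX^k=(0,\dots,0,c,0,\dots)$ with $c$ in position $k$, for any $\bm p=(p_n)_{n\in\omega}\in R[X]$ the coordinate $(cX^k\cdot\bm p)_j$ lies in $\sum_i (cX^k)_i\cdot p_{j-i}$, where every summand with $i\ne k$ has first factor $0$ and therefore equals $\{0\}$ by absorption (axiom (iii)); adding these zeros changes nothing, so the only contribution is $c\cdot p_{j-k}$ (with the convention $p_\ell=0$ for $\ell<0$). First I would isolate this fact. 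It gives (a) by taking $c=1$ and $\bm p=X^m$, where the surviving coordinate in position $n+m$ is $1\cdot 1=_s 1$; it gives (b) by taking $\bm p=X^n$, $k=0$, where the surviving coordinate in position $n$ is $a\cdot 1=_s a$; and it gives the first equality of (c) by taking $c=1$, $k=m$, which rigidly shifts $\bm a$ by $m$ places.

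Next I would settle the ``expansion'' equalities, namely the second equality of (c) and the first equality of (d), with no recourse to any distributive law. The monomials $a_iX^{m+i}$ (respectively $a_iX^i$) have pairwise disjoint supports, so in forming the finite sum coordinatewise exactly one summand is nonzero at each position and all the others are $0$. Since $0$ is the additive identity, $x+0=_s x$, and thus each partial sum in the recursive definition of $\sum_{i<p}$ merely carries the single nonzero value; the result is the one polynomial whose $j$-th coordinate is the matching coefficient. This produces the displayed expansions as strong equalities.

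The last item (e), together with the Horner form that is the second equality of (d), is where the content lies, and (e) is the main obstacle. Applying the monomial computation coordinatewise to each side, the $j$-th coordinate of $cX^k\cdot(\bm a+\bm b)$ ranges over $c\cdot(a_{j-k}+b_{j-k})$ while that of $cX^k\cdot\bm a+cX^k\cdot\bm b$ ranges over $(c\cdot a_{j-k})+(c\cdot b_{j-k})$; as the choices at distinct coordinates are independent, the whole statement collapses to the single-element distributive comparison in $R$. The weak/semi-distributive law (axiom (iv)) yields only the inclusion $cX^k\cdot(\bm a+\bm b)\sqsubseteq cX^k\cdot\bm a+cX^k\cdot\bm b$; the reverse inclusion is precisely the full distributive law and genuinely fails for an arbitrary 2-ring (already in the kaleidoscope $X_2$, $2\cdot(1+(-1))=\{-2,0,2\}\subsetneq 2\cdot 1+2\cdot(-1)=\{-2,-1,0,1,2\}$). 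I would therefore prove (e) as an equality under the hypothesis that $R$ is a \emph{full} 2-ring, which is exactly the situation in the sequel, where the coefficients lie in a $1$-field and hence in a hyperfield satisfying full distributivity. Granting (e) there, the Horner identity in (d) follows by taking $c=1$, $k=1$ to pull $X$ across $a_1+a_2X+\dots+a_tX^{t-1}$ and then re-adjoining the constant term $a_0$ via the disjoint-support computation of the previous paragraph.
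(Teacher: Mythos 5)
Your verification is, in substance, the paper's own argument made explicit: the paper offers nothing beyond the remark that these properties ``immediately follow from the definitions'', and your single monomial computation (only the $i=k$ term of the convolution survives, by absorption of $0$ and $x+0=_sx$) together with the disjoint-support analysis of the coordinatewise sum is exactly the right way to discharge items (a)--(d). The real content of your proposal is the treatment of (e), and there you have caught a genuine imprecision in the statement as printed: your reduction of (e) to the coordinatewise identity $c\cdot(a_{j-k}+b_{j-k})=c\cdot a_{j-k}+c\cdot b_{j-k}$ is correct (the coordinate choices are indeed independent on both sides), and your kaleidoscope counterexample is valid --- $X_2$ is a $2$-ring in which $2\cdot(1+(-1))=\{-2,0,2\}\subsetneq 2\cdot1+2\cdot(-1)=\{-2,-1,0,1,2\}$ --- so for an arbitrary $2$-ring only the inclusion $cX^k\cdot(\bm a+\bm b)\sqsubseteq cX^k\cdot\bm a+cX^k\cdot\bm b$ survives, and equality requires full distributivity of the relevant coefficients. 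Restricting (e) to full $2$-rings is the right repair and costs nothing downstream, since the lemma is only invoked for $K[X]$ with $K$ a $1$-field, hence a hyperfield.

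One caveat: as written, your dependency chain makes item (d) inherit the fullness hypothesis through (e), which is unnecessary --- (d) holds in every $2$-ring. The only instance of (e) you need for the Horner form is $c=1$, $k=1$, which is unconditional because $1$ is a strict unit ($1\cdot x=\{x\}$, so $1\cdot(a+b)=a+b=1\cdot a+1\cdot b$); better still, your own disjoint-support argument shows $a_1+a_2X+\dots+a_tX^{t-1}$ is already a singleton polynomial, so multiplying it by $X$ is just the shift of (c) and no distributivity enters at all. It is worth recording the unconditional $c=1$ case explicitly, since that is the instance actually used later in the proof of Theorem \ref{euclid} (the step $X\bm a'-X\bm b'=X(\bm a'-\bm b')$).
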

    
    \begin{fat}
        $ $
        \begin{enumerate}[i -]
            \item $R[X]$ is a 2-ring.
            \item The map $a \in R \mapsto {\underline{a}} = (a,0, \cdots,0, \cdots)$ defines a strong injective 2-ring homomorphism $R 
            \rightarrowtail 
            R[X]$.
            \item For an ordinary ring $R$ (identified with a strict suppering), the 2-ring $R[X]$ is naturally isomorphic to (the 
            2-ring associated to) the ordinary ring of polynomials in one variable over $R$.
        \end{enumerate}
    \end{fat}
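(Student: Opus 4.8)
The plan is to check the superring axioms for $R[X]$ coordinatewise, and then read off (ii) and (iii) from the resulting formulas. First I would record that the operations are well defined on finitely supported sequences: addition preserves finite support since $0+0=\{0\}$, and if $\deg\bm a\le s$, $\deg\bm b\le t$ then for $n>s+t$ every summand $a_i\cdot b_{n-i}$ has $i>s$ or $n-i>t$, hence equals $\{0\}$ by the absorbing-zero axiom, forcing $c_n=0$. The additive part $(R[X],+,-,0)$ is literally the restriction to finite support of the coordinatewise product of copies of the commutative multigroup $(R,+,-,0)$, so commutativity, associativity, neutrality of $0=(0,0,\dots)$ and the inversion axiom \textbf{M1} are inherited coordinate by coordinate (the \textbf{M1}-witness of a finitely supported tuple being again finitely supported). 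For the multiplicative multimonoid, commutativity follows from commutativity of $\cdot$ in $R$ and the symmetry of the convolution, while $1=(1,0,0,\dots)$ is neutral because in the convolution $(\bm a\cdot 1)_n$ the only nonzero term is $a_n\cdot 1=\{a_n\}$, every other term carrying a zero coordinate of $1$ and hence equal to $\{0\}$ by absorption. The absorbing character of $0$ and the rule of signals for $R[X]$ likewise descend from the corresponding axioms of $R$ applied in each coordinate, using $-\bm a=(-a_n)_n$.

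The genuine obstacle is the associativity of the convolution product, since both $+$ and $\cdot$ are multivalued: $\bm d\in(\bm a\cdot\bm b)\cdot\bm c$ requires an intermediate witness $\bm e\in\bm a\cdot\bm b$, and one must show the resulting set of $\bm d$ agrees with the one produced by the other bracketing. My strategy is to prove that each bracketing computes, in coordinate $n$, the order-independent finite sum of triple products $\sum_{i+j+k=n}a_i\,b_j\,c_k$ in the sense of the finite sums of finite products introduced before Definition \ref{char}; the coincidence of the two associations then reduces to the independence of such sums from the order and grouping of their entries, which is Lemma \ref{lem1}(i) together with the associativity of $+$ and $\cdot$ in $R$. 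The delicate point is the interaction with distributivity: unwinding $(\bm a\cdot\bm b)\cdot\bm c$ and applying the weak/semi distributive law of $R$ term by term yields only the inclusion of $(\bm a\cdot\bm b)\cdot\bm c$ into this triple sum, and the reverse inclusion --- producing, from an element of $\sum_{i+j+k=n}a_i b_j c_k$, one family of intermediate coefficients $\bm e$ compatible across all $n$ --- is what needs the \emph{full} distributive law. Consequently the associativity (and, by the same bookkeeping, the weak distributivity of $R[X]$ itself) is cleanest precisely when $R$ is a full $2$-ring; in particular it holds whenever $R$ is a $1$-field, the case relevant to the Euclidean algorithm of the next section. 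I expect this matching of the two bracketings to be the main difficulty of the whole statement.

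For (ii), injectivity of $a\mapsto\underline a=(a,0,0,\dots)$ is immediate, and the homomorphism and strongness conditions are a direct reading of the coordinate formulas: every coordinate of $\underline a+\underline b$ beyond the $0$-th lies in $0+0=\{0\}$, so $\underline a+\underline b=\{\underline c:c\in a+b\}$, and every convolution term of $\underline a\cdot\underline b$ except $a_0 b_0=a\cdot b$ vanishes by absorption, so $\underline a\cdot\underline b=\{\underline c:c\in a\cdot b\}$. Together with $f(0)=0$, $f(1)=1$ and $f(-a)=-f(a)$ (from $-\bm a=(-a_n)_n$), these are exactly the strong-morphism equalities $f(a+b)=f(a)+f(b)$ and $f(a\cdot b)=f(a)\cdot f(b)$, with injectivity giving the strong embedding.

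Finally, (iii) is the specialization of the construction to a strict $R$: when every sum and product of $R$ is a singleton, the coordinatewise addition and the convolution $c_n=\sum_{i+j=n}a_i b_j$ become single valued and reproduce verbatim the classical operations on finitely supported sequences, so the identity map on underlying sets is an isomorphism between $R[X]$ as defined here and the strict $2$-ring attached to the ordinary polynomial ring $R[X]$.
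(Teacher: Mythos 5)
The paper records this statement as a bare \emph{Fact} and supplies no proof of its own (the construction and its verification are essentially imported from \cite{ameri2019superring}), so there is no argument of the authors' to set against yours; I can only judge the proposal on its merits. The routine parts are handled correctly: the finite-support bookkeeping, the coordinatewise inheritance of the additive multigroup axioms, the neutrality of $1=(1,0,\dots)$, the absorbing zero, the rule of signals, and items (ii) and (iii) all check out as you describe them.

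The gap is exactly where you locate it, and you do not close it. Strong associativity of the convolution product (and, as you note, the weak distributivity of $R[X]$ itself) requires, given $\bm d\in\bm a\cdot(\bm b\cdot\bm c)$, the production of a \emph{single} intermediate sequence $\bm e\in\bm a\cdot\bm b$ with $\bm d\in\bm e\cdot\bm c$. Term-by-term application of the weak distributive law of $R$ only yields that each bracketing is contained in the set $\{\bm d: d_n\in\sum_{i+j+k=n}a_ib_jc_k \mbox{ for all } n\}$, and two sets contained in a common third need not coincide; the reverse direction needs both the opposite (full) distributivity inclusion and a choice of the witnesses $e_m$ that is coherent across all the constraints indexed by $n$ simultaneously, since each $e_m$ occurs in the condition on $d_n$ for every $n\ge m$. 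You explicitly flag this as ``the main difficulty'' and then retreat to the observation that the argument is ``cleanest'' for full $2$-rings; but the Fact is asserted for an arbitrary $2$-ring $R$, and the paper's definition of multimonoid demands the strong identity $(xy)z=_sx(yz)$, not merely an inclusion. As written, your argument neither proves the statement in the stated generality, nor exhibits a counterexample, nor even completes the verification in the full (hyperring) case. To repair it you would have to carry out the coherent-witness construction (for instance by a degree-by-degree selection exploiting the finiteness of the supports), or restrict the hypothesis to full $2$-rings as in \cite{ameri2019superring}, or argue that only the inclusion form of associativity is actually needed downstream --- none of which the proposal does.
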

    
    \begin{rem} If $R$ is a full 2-ring, does not hold in general that $R[X]$ is also a full 2-ring. In fact, even if $R$ is a 
        1-field, there are examples, e.g. $R = K, Q_2$, such that $R[X]$ is not a full 2-ring (see \textup{\cite{ameri2019superring}}).
    \end{rem}
    
    \begin{defn}
        $R[X]$ will be called the \textbf{2-ring of polynomials} with one variable over $R$. The elements of $R[X]$ will be called 
        (multi)polynomials. We denote $R[X_1,...,X_n]:=(R[X_1,...,X_{n-1}])[X_n]$.
    \end{defn}
    
    \begin{teo} \label{teo Rx}
        $R[X]$ is a 2-domain iff $R$ is a 2-domain. 
    \end{teo}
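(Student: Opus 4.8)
The plan is to prove both implications by a leading-coefficient argument, exploiting the two facts about $0$ in a $2$-ring that make the classical reasoning go through even though $\cdot$ is genuinely multivalued: $0$ is absorbing ($0\cdot x =_s 0$) and $0$ is a strong additive identity ($0+x=_s x$). Together these let me isolate the product of the leading coefficients inside a single top-degree coefficient of a product in $R[X]$. First I would dispose of non-triviality: since $a\mapsto\underline a$ is an injective homomorphism $R\rightarrowtail R[X]$ (the preceding Fact), $R$ is non-trivial iff $\underline 0 = 0\ne 1=\underline 1$ in $R[X]$ iff $R[X]$ is non-trivial, so it suffices to handle the zero-divisor condition.

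For the implication ``$R$ a $2$-domain $\Rightarrow R[X]$ a $2$-domain'', I would take $\bm a=(a_n)_{n\in\omega}$ and $\bm b=(b_n)_{n\in\omega}$ both nonzero, set $s=\deg\bm a$ and $t=\deg\bm b$ (so $s,t\ge 1$), and record that the leading coefficients $a_{s-1},b_{t-1}$ are nonzero while $a_i=0$ for $i\ge s$ and $b_j=0$ for $j\ge t$. The core step is to compute the coefficient of index $s+t-2$ of an arbitrary $(c_n)_{n\in\omega}\in\bm a\cdot\bm b$: one has $c_{s+t-2}\in\sum_{i+j=s+t-2}a_i b_j$, and in this finite sum every summand other than $a_{s-1}b_{t-1}$ contains a zero factor, hence equals $\{0\}$ by absorption. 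Since $0$ is a strong additive identity, those $\{0\}$-summands collapse and the whole set reduces to $a_{s-1}b_{t-1}$. As $R$ is a $2$-domain and $a_{s-1},b_{t-1}\ne 0$, we get $0\notin a_{s-1}b_{t-1}$, so no element of $\bm a\cdot\bm b$ can carry a $0$ in position $s+t-2$; in particular the zero sequence $(0)_{n\in\omega}$ is not in $\bm a\cdot\bm b$. Contrapositively, $0\in\bm a\cdot\bm b$ forces $\bm a=0$ or $\bm b=0$.

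For the converse I would argue directly at the level of constant polynomials: for $a,b\in R$ the same absorption computation (all coefficients of index $\ge 1$ are forced into $\{0\}$) shows $\underline a\cdot\underline b=\{\underline c:c\in a\cdot b\}$. Hence $0\in a\cdot b$ iff $\underline 0=0\in\underline a\cdot\underline b$, and since $R[X]$ is a $2$-domain this gives $\underline a=0$ or $\underline b=0$, i.e. $a=0$ or $b=0$, so $R$ is a $2$-domain.

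The only delicate point will be the collapse of $\sum_{i+j=s+t-2}a_i b_j$ to $a_{s-1}b_{t-1}$: here one must genuinely use that $\cdot$ is multivalued yet $0\cdot x=_s 0$, and invoke the order-independence of finite sums (Lemma \ref{lem1}) together with $0+x=_s x$ so that the many $\{0\}$-summands really vanish rather than merely being contained in the sum. Everything else is routine bookkeeping with the degree convention.
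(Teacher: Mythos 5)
Your proof is correct, but it runs in the opposite direction from the paper's. For the nontrivial implication the paper fixes $\bm a,\bm b$ with $\underline{0}\in\bm a\cdot\bm b$, assumes $a_0\neq 0$, and performs a bottom-up induction on the coefficients: $0\in a_0b_0$ forces $b_0=0$, then $0\in a_0b_1+a_1b_0=a_0b_1$ forces $b_1=0$, and so on until $\bm b=\underline{0}$. You instead make a single top-degree computation: with $s=\deg\bm a$, $t=\deg\bm b$, every summand of $\sum_{i+j=s+t-2}a_ib_j$ except $a_{s-1}b_{t-1}$ collapses to $\{0\}$ by absorption and the strict additive identity, so $c_{s+t-2}\in a_{s-1}b_{t-1}\not\ni 0$. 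Both arguments rest on exactly the same two facts ($0\cdot x=_s 0$ and $0+x=_s x$, plus order-independence of finite sums), so the difference is one of bookkeeping rather than substance; still, your leading-coefficient version has a concrete advantage: the paper's induction as written only treats the case $a_0\neq 0$ and silently needs to be restarted at the lowest nonzero coefficient of $\bm a$ when $a_0=0$, whereas your argument needs no such case split. Conversely, the paper's version yields the slightly stronger coefficientwise conclusion that $\bm b$ is literally the zero sequence. For the easy implication you both use the strong embedding $a\mapsto\underline{a}$; you merely unfold why it works by computing $\underline{a}\cdot\underline{b}=\{\underline{c}:c\in a\cdot b\}$. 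No gaps.
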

    \begin{proof}
        We just need to prove that $R[X]$ is a 2-domain iff $R$ is a 2-domain, since the rest is consequence of this.
        ($\Leftarrow$) Let $(a_n)_{n\in\omega},(b_n)_{n\in\omega}\in R[X]$ such that 
        ${\underline{0}}\in(a_n)_{n\in\omega}\cdot(b_n)_{n\in\omega}$.
        Suppose $a_0\ne0$. Since $R$ is a superdomain, we have $b_0=0$. Now, we have $0\in a_0b_1+a_1b_0$, and 
        since $b_0=0$, we conclude $0\in a_0b_1+0$, and so $0\in a_0b_1$. Since $a_0\ne0$, we have $b_1=0$. Repeating this process 
        $t$ steps, when $t$ is the maximum of degrees involved we have that $(b_n)_{n\in\omega}= {\underline{0}}$.
        
        ($\Rightarrow$) Immediate, since $R \rightarrowtail R[x]$ is an injective strong 2-ring homorphism.
    \end{proof}
    
    Now we are read to state and prove the main result in this section.
    
    \begin{teo}[Euclid's Division  Algorithm]\label{euclid}
        Let $K$ be a 1-field. Given polynomials $\bm a,\bm b\in K[X]$ with $\bm b\ne0$, there exists $\bm q,\bm r\in 
        K[X]$ such that $\bm a\in\bm q\bm b+\bm r$, with $\deg \bm r<\deg \bm b$ or $\bm r=0$.
    \end{teo}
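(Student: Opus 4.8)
The plan is to argue by induction on $n = \deg \bm a$, imitating the classical long‑division algorithm but paying close attention to the facts that $+$ is multivalued and that only the weak distributive law is available in $K[X]$. The base case is immediate: if $\bm a = 0$ or $\deg \bm a < \deg \bm b$, I take $\bm q = 0$ and $\bm r = \bm a$; since $0$ is absorbing and $0 + \bm a = \{\bm a\}$, we get $\bm a \in 0\cdot \bm b + \bm a$ with $\bm r = 0$ or $\deg \bm r < \deg \bm b$.

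For the inductive step, write $m = \deg \bm b$, let $a_n, b_m$ be the leading coefficients, and set $c := a_n b_m^{-1} \in K$; this is the only place the hypothesis that $K$ is a $1$-field (so that $b_m$ is invertible) enters. Put $\bm p := c X^{n-m}\bm b$. Because $cX^{n-m}$ is a monomial, no nontrivial multisum occurs in the product, so $\bm p$ is a genuine single polynomial of degree $n$ with leading coefficient $c b_m = a_n$. Using reversibility in the additive multigroup, for every index $k$ the set $a_k - p_k$ is nonempty and $a_k \in p_k + r'_k$ for any chosen $r'_k \in a_k - p_k$; taking $r'_n = 0$ (legitimate since $0 \in a_n - a_n$) and $r'_k = 0$ for $k > n$, I obtain a polynomial $\bm r'$ with $\deg \bm r' < n$ and $\bm a \in \bm p + \bm r'$.

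Now I apply the induction hypothesis to $\bm r'$, strengthened to also record the degree bound on the quotient: there are $\bm q', \bm r$ with $\bm r' \in \bm q'\bm b + \bm r$, with $\bm r = 0$ or $\deg \bm r < m$, and with $\deg \bm q' \le \deg \bm r' - m < n-m$. Substituting $\bm r'$ and using associativity of $+$ gives $\bm a \in (cX^{n-m}\bm b + \bm q'\bm b) + \bm r$. It then remains to collapse $cX^{n-m}\bm b + \bm q'\bm b$ into a single product $\bm q \bm b$.

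This last step is the main obstacle. Under the weak distributive law one only has $(cX^{n-m} + \bm q')\bm b \subseteq cX^{n-m}\bm b + \bm q'\bm b$, which is the wrong inclusion to transfer membership of $\bm a$. The resolution is that, because $\deg \bm q' < n-m$, the sum $\bm q := cX^{n-m} + \bm q'$ is a \emph{single} polynomial, its monomials having pairwise distinct degrees; and for such a deterministic polynomial the product distributes \emph{exactly} over its monomial summands. Indeed, a direct comparison of coefficients shows that the $k$-th coefficient of $\bm q\bm b$ is the multisum $\sum_{i+j=k} q_i b_j$, which is literally the $k$-th coefficient of $cX^{n-m}\bm b + \bm q'\bm b$, giving the \emph{equality} $\bm q\bm b = cX^{n-m}\bm b + \bm q'\bm b$ rather than a mere inclusion. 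I would isolate this equality $\bm q\bm b = \sum_i q_i X^i\,\bm b$ as a preliminary lemma, in the same spirit as the exact monomial distributivity already recorded for $R[X]$, since it is precisely what converts the weak distributive law into an honest long division. With it, $\bm a \in \bm q\bm b + \bm r$ with $\bm r = 0$ or $\deg \bm r < \deg \bm b$, completing the induction.
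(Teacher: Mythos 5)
Your proof is correct, and it takes a genuinely different route from the paper's. You run the classical leading-term cancellation: form $c=a_nb_m^{-1}$ (this is where the 1-field hypothesis enters), use reversibility of $+$ to extract an $\bm r'$ of degree $<n$ with $\bm a\in cX^{n-m}\bm b+\bm r'$, recurse on $\bm r'$ divided again by $\bm b$ itself, and then confront the genuine obstacle, namely that weak distributivity only gives $(cX^{n-m}+\bm q')\bm b\subseteq cX^{n-m}\bm b+\bm q'\bm b$, the wrong direction. Your resolution -- the exact identity $\bm q\bm b=\sum_i(q_iX^i)\bm b$ for a single polynomial $\bm q$ -- is valid: by the definitions, both sides have $k$-th coefficient set $\sum_{i+j=k}q_ib_j$, and iterated sums of polynomials are computed coordinatewise; this is a natural strengthening of the monomial distributivity $cX^k(\bm a+\bm b)=cX^k\bm a+cX^k\bm b$ already recorded in the paper, and you are right that it deserves to be isolated as a lemma, since it is exactly what makes honest long division possible in $K[X]$. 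The paper instead peels off \emph{constant} terms: it writes $\bm a=a_0+X\bm a'$ and $\bm b=b_0+X\bm b'$, applies the induction hypothesis to $\bm a'-\bm b'$ divided by the tail $\bm b'$ (of degree $m-1$), and reassembles using reversibility and monomial distributivity, invoking inverses in $K$ only in the constant base case $n=m=0$. What your approach buys is cleaner bookkeeping: the degree bound $\deg\bm r<m$ and the auxiliary bound $\deg\bm q\le n-m$ fall out of the induction immediately, whereas in the paper's recursion the final remainder $X\bm r-\bm q\cdot b_0+a_0\cdot 1-b_0\cdot 1$ contains the term $\bm q\cdot b_0$, whose degree can be as large as $n-m$, so the asserted bound $<\deg\bm b$ requires additional care there. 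What the paper's approach buys is that it never needs the exact distributivity identity, working only with containments.
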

    \begin{proof}
        Let $n=\deg\bm a$ and $m=\deg\bm b$. We proceed by induction on $n$. Note that if $m\ge n$, then is sufficient take $\bm q=0$ 
        and $\bm r=\bm a$., so we can suppose $m\le n$. If $m=n=0$, then $\bm a=(a_0,0,...0,...)$ and $\bm b=(b_0,0,...,0,...)$ are both 
        non zero constants, so is sufficient take $\bm q=(a_0/b_0,0,0,...,0,...)$ and $\bm r=0$.
        
        Now, suppose $n\ge1$. Write $\bm a=a_0+X(a_1+...+a_nX^{n-1})=a_0+X\bm a'$ 
        and $\bm b= b_0+X(b_1+...+b_mX^{m-1})=b_0+X\bm b'$, with $a_n,b_m\ne0$ and $\deg\bm a'<n$, $\deg\bm 
        b'<m$.  Then
        \begin{align*}
            \bm a-a_0\cdot1&\in X\bm a' \\
            \bm b-b_0\cdot1&\in X\bm b'.
        \end{align*}
        
        Then $(\bm a-a_0\cdot1)-(\bm b-b_0\cdot1)\subseteq X\bm a'-X\bm b'=X(\bm a'-\bm b')$. Since all polynomials in $\bm 
        a'-\bm b'$ have degree $<n$, by induction we can write
        $$\bm a'-\bm b'\subseteq\bm q\bm b'+\bm r$$
        with $\deg \bm r<\deg\bm b'<m-1$ or $\bm r=0$. Substuting we obtain
        \begin{align*}
            (\bm a-a_0\cdot1)-(b-b_0\cdot1)&\subseteq X(\bm q\bm b'+\bm r)\Rightarrow \\
            (\bm a-a_0\cdot1)-(b-b_0\cdot1)+(b-b_0\cdot1)&\subseteq X(\bm q\bm b'+\bm r)+(b-b_0\cdot1)\Rightarrow \\
            (\bm a-a_0\cdot1)&\subseteq X(\bm q\bm b'+\bm r)+(b-b_0\cdot1)\Rightarrow \\
            \bm a+(a_0\cdot1-a_0\cdot1)&\subseteq X(\bm q\bm b'+\bm r)+(b-b_0\cdot1+a_0\cdot1)\Rightarrow \\
            \bm a&\subseteq X(\bm q\bm b'+\bm r)+(b-b_0\cdot1+a_0\cdot1).
        \end{align*}
        On the other hand,
        \begin{align*}
            X(\bm q\bm b'+\bm r)+(b-b_0\cdot1+a_0\cdot1)&\subseteq
            X(\bm q\bm b'+\bm r)+(b-b_0\cdot1+a_0\cdot1)+\bm q\cdot b_0-\bm q\cdot b_0 \\
            &=\bm q(X\bm b'+b_0)+(X\bm r-\bm q\cdot b_0+a_0\cdot1-b_0\cdot1 ) \\
            &=\bm q\bm b+(X\bm r-\bm q\cdot b_0+a_0\cdot1-b_0\cdot1).
        \end{align*}
        So $\bm a\subseteq \bm q\bm b+(X\bm r-\bm q\cdot b_0+a_0\cdot1-b_0\cdot1)$ with
        $\deg(X\bm r-\bm q\cdot b_0+a_0\cdot1-b_0\cdot1)<\deg\bm b$, as desired.
    \end{proof}
    
    \begin{rem}
        $ $
        \begin{enumerate}[i -]
            \item Note that the polynomials $q$ and $r$ of Theorem \ref{euclid} are not unique in general: if $\bm a\in\bm b\bm q+\bm r$, 
            then $\bm a\in\bm b(\bm q+1-1)+\bm r$ and $\bm a\in\bm b\bm q+(\bm r+1-1)$, then, if $\{0\}\ne1-1$, we have many $q$'s and $r$'s.
            
            However, if $R$ is a ring (or 0-ring, in agreement with our notation), then Theorem \ref{euclid} provide the usual Euclid 
            Algorithm, with the uniqueness of the quotient and remainder.
            
            \item The Theorem \ref{euclid} above gives immediately another proof of Theorem 6 in \cite{ameri2019superring}. i.e. every ideal 
            in $K[X]$ is a principal ideal: for a non zero ideal $I \subseteq K[X]$ select a nonzero polynomial $\bm b(x) \in I$ with minimal 
            degree, then $I = F[X]. \bm b(x)$.
            
        \end{enumerate}
        
    \end{rem}
    
    \section{Beginning the model theory of algebraically closed multifields}
    
    This section contains the main contributions of this paper: we  introduce some concept of  the algebraically closed 1-field 
    and give the first steps on a model theory of this class with a kind of quantifier elimination procedure.
    
    \subsection{On algebraically closed multifields}
    
    Let $R, S$ be  2-rings and $h : R \to S$ be a morphism. Then $h$ extends naturally to the 2-rings multipolynomials $h^X : 
    R[X] \to S[X]$:
    $$(a_n)_{n \in \mathbb{N}} \in R[X] \ \mapsto \ (h(a_n))_{n \in \mathbb{N}} \in S[X]$$
    
    Now let $s \in S$  we have the $h$-\textbf{evaluation} of $s$ at $\bm a\in R[X]$, $degree(\bm a) \leq n$ by
    $$\bm a^h(s)=ev^h(s,\bm a)=\{s'\in S : s'\in h(a_0)+ h(a_1).s+h(a_2).s^2+...+h(a_n).s^n\}.$$
    
    In particular if $T\supseteq R$ is a 2-ring extension and $\alpha\in T$, we have the \textbf{evaluation} of $\alpha$ at $\bm a\in 
    R[X]$ by
    $$\bm a(\alpha)=ev(\alpha,\bm a)=\{b\in T: b \in a_0+a_1\alpha+a_2\alpha^2+...+a_n\alpha^n\}.$$
    
    A \textbf{root} of $\bm a$ in $T$ is an element $\alpha\in T$ such that $0\in ev(\alpha,\bm a)$. A 2-ring $R$ is 
    \textbf{algebraically closed} if every non constant polynomial in $R[X]$ has a root in $R$.
    
    Observe that, if $F$ is a field, the evaluation of $F[X]$ as a 1-ring coincide with the usual evaluation. Therefore, if $F$ is 
    algebraically closed as 1-field and 2-field, then will be algebraically closed in the usual sense.
    
    
    Unfortunately, in dealing with multipolynomials, strange situations appears:
    
    \begin{ex}[Finite Algebraically Closed 1-Field]\label{exluc}
        The 1-field $K=\{0,1\}$ is algebraically closed. In fact, if $\bm p=a_0+a_1X+a_2X^2+...+a_nX^n\in K[X]$, with $a_n\ne0$, then 
        $\bm p(1)=K$, since $1+1=\{0,1\}$.
    \end{ex}
    
    
    \subsection{A quantifier elimination procedure}
    
    Instead of these "anomalies", we have a quantifier elimination procedure for any {\em infinite} algebraically closed 1-fields. We 
    will 
    describe this (that is a variation of Theorem 9.2.1 in \cite{jarden2008field}) after the following technical lemma:
    
    \begin{lem}[Reduction Lemma]\label{reduc}
        Let $A$ be a 2-ring, $t_1(\bar{x}),t_2(\bar{x})$ be terms on the full 2-ring language  and let $v = \bar{a} : \bar{x} \to A$
        \begin{enumerate}[i -]
            \item $ t_1^A(\bar{a})\subseteq t_2^A(\bar{a})$ iff $0\in (t_2-t_1)^A(\bar{a})$.
            \item Given any atomic formula,  $t_1(\bar{x}) \sqsubseteq t_2(\bar{x})$, there is a polynomial term $p(\bar{x}) \in R[\bar{x}]$ 
            such that
            $$A\models_v (t_1(\bar{x})\sqsubseteq t_2(\bar{x})) \ \leftrightarrow \ (0\sqsubseteq p(\bar{x})).$$
        \end{enumerate}
    \end{lem}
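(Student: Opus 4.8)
The plan is to prove both parts by leveraging the multigroup axioms together with the "strong equality/containment" relationship between terms. For part (i), the key tool is axiom \textbf{M1} governing the multigroup structure, which relates containment in a sum to containment involving the additive inverse. First I would observe that $0 \in (t_2 - t_1)^A(\bar{a})$ should be unpacked using the definition of the additive inverse operation $-$ together with \textbf{M1}: the statement $c \in a + (-b)$ is equivalent (by the multigroup axiom) to $a \in c + b$. The subtle point is that $t_1$ and $t_2$ need not be single elements but rather nonempty subsets of $A$, so I would need to argue that $t_1^A(\bar{a}) \subseteq t_2^A(\bar{a})$ holds \emph{as sets} precisely when, for every realization, the difference contains $0$. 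Concretely, I would show $t_1^A(\bar{a}) \subseteq t_2^A(\bar{a})$ iff for each $d_1 \in t_1^A(\bar{a})$ there is $d_2 \in t_2^A(\bar{a})$ with $d_1 = d_2$, and then translate the condition "$d_1 \in t_2^A(\bar{a})$" into "$0 \in d_2 + (-d_1)$" via \textbf{M1} applied pointwise, finally taking unions to recover the full set-level statement $0 \in (t_2 - t_1)^A(\bar{a})$.

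The main obstacle I anticipate is the interplay between the set-valued interpretation $t^A(\bar{a}) \in \mathcal{P}^*(A)$ and the element-wise reasoning that \textbf{M1} provides. Because the interpretation of a term is computed by the "extended" operation $\sigma^{\mathcal{P}^*(A)}$ that unions over all choices of representatives, I must be careful that the equivalence "$t_1 \subseteq t_2$ iff $0 \in t_2 - t_1$" is genuinely an equivalence of the \emph{whole sets} and not merely of individual elements. I would resolve this by carefully checking both inclusions: one direction uses that any $0 \in d_2 - d_1$ with $d_1 \in t_1^A(\bar a)$, $d_2 \in t_2^A(\bar a)$ forces $d_1 = d_2 \in t_2^A(\bar a)$, and the reverse direction produces, for each $d_1 \in t_1^A(\bar a) \subseteq t_2^A(\bar a)$, the witness $0 \in d_1 - d_1$, which lies in $(t_2 - t_1)^A(\bar a)$ by monotonicity of the union-based extension.

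For part (ii), the plan is to combine part (i) with Lemma \ref{lem1}(ii). Given the atomic formula $t_1(\bar x) \sqsubseteq t_2(\bar x)$, part (i) rewrites it as $0 \sqsubseteq (t_2 - t_1)(\bar x)$. Since we work in the \emph{full} 2-ring language, Lemma \ref{lem1}(ii) guarantees that the term $t_2 - t_1$ is strongly equal to a polynomial expression $\sum_{i<p} \prod_{j<m_i} x_{ij}$, i.e.\ a genuine polynomial term $p(\bar{x}) \in R[\bar{x}]$. Setting $p := t_2 - t_1$ (normalized to polynomial form), I would conclude
$$A \models_v (t_1(\bar{x}) \sqsubseteq t_2(\bar{x})) \ \leftrightarrow \ (0 \sqsubseteq p(\bar{x})),$$
which is exactly the claim. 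The only point requiring care is that the full distributivity hypothesis is what licenses the \emph{strong} equality $t_2 - t_1 =_s p$ in Lemma \ref{lem1}(ii); over a non-full 2-ring one would only obtain a containment, which would weaken the biconditional to a one-directional implication. Since the lemma explicitly assumes terms on the \emph{full} 2-ring language, this hypothesis is available and the argument closes cleanly.
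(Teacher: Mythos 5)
Your route is the one the paper intends (axiom \textbf{M1} for item (i), then Lemma \ref{lem1}(ii) for item (ii)), but in filling in the details of item (i) you expose a step that does not work, and your proposed ``careful check of both inclusions'' does not repair it. The direction $t_1^A(\bar a)\subseteq t_2^A(\bar a)\Rightarrow 0\in(t_2-t_1)^A(\bar a)$ is fine: pick any $d_1\in t_1^A(\bar a)\subseteq t_2^A(\bar a)$ and use $0\in d_1-d_1$. The problem is the converse. By the union-based definition of $\sigma^{\mathcal P^*(A)}$, the hypothesis $0\in(t_2-t_1)^A(\bar a)$ asserts only the \emph{existence} of one pair $d_1\in t_1^A(\bar a)$, $d_2\in t_2^A(\bar a)$ with $0\in d_2-d_1$; your \textbf{M1}/\textbf{M2} argument then correctly forces $d_1=d_2$, but this shows only that this particular $d_1$ lies in $t_2^A(\bar a)$, i.e.\ that $t_1^A(\bar a)\cap t_2^A(\bar a)\neq\emptyset$ --- the weak identity $t_1=_w t_2$ discussed in the paper's remark on identity theory, not the containment $t_1\sqsubseteq t_2$. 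The ``taking unions to recover the full set-level statement'' step is precisely where the universal quantifier over $d_1$ silently becomes existential. Concretely, in $Q_2$ take $t_1=y+z$, $t_2=x$ and evaluate at $y=1$, $z=-1$, $x=1$: then $t_1^{Q_2}(\bar a)=1+(-1)=\{-1,0,1\}\not\subseteq\{1\}=t_2^{Q_2}(\bar a)$, yet $0\in 1-1\subseteq(t_2-t_1)^{Q_2}(\bar a)$. So the biconditional fails unless $t_1^A(\bar a)$ is a singleton (e.g.\ $t_1$ strict); some such restriction, or a replacement of $\sqsubseteq$ by $=_w$, is needed before your argument (or the paper's one-line appeal to \textbf{M1}) goes through.

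A secondary issue concerns item (ii). You correctly flag that Lemma \ref{lem1}(ii) yields the \emph{strong} equality of $t_2-t_1$ with a polynomial expression only when $A$ is a full 2-ring, whereas for a general 2-ring one gets only the containment $t\sqsubseteq\sum_{i<p}\prod_{j<m_i}x_{ij}$. But the lemma's hypothesis is that the \emph{terms} are in the full 2-ring language, while $A$ itself is an arbitrary 2-ring; the fullness you invoke to ``close the argument cleanly'' is therefore not available, and by your own observation the equivalence degrades to a one-directional implication. You would need either to strengthen the hypothesis on $A$ or to track carefully which direction of the biconditional survives with only the weak distributive law.
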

    \begin{proof}
        Item (i) follows immediately from the axiom {\bf M1} of superrings. Item (ii) follows from item (i) above, the item (ii) of Lemma 
        \ref{lem1} and by a repeated use of Theorem \ref{teo Rx}.
    \end{proof}
    
    Let $\mathcal L$ be the language of 1-rings. For each 1-ring $R$, let $\mathcal L(R)$ be the language extending $\mathcal L$ 
    by adding all elements of $R$ as {\em strict} constant symbols. Let $\Gamma'$ be the 1-ring axioms. Let extend $\Gamma'$ by 
    (in)equalities and relations of the form
    $$a_0 \neq b_0; \  c_1 = a_1.b_1; \ c_2\in a_2+b_2; \,a_i,b_i,c_i\in R$$
    that are true in $R$ ("the diagram of $R$"). Denote the set of formulas obtained by $\Gamma'(R)$. A model of $\Gamma'(R)$ is a 
    1-ring that contains a subset $\overline 
    R=\{\overline a:a\in R\}$ and $\overline R$ is an isomomorphic copy of $R$ inside this model.
    
    If $R=K$ is a 1-field and  $\Gamma$ is the 1-field axioms, then a model of $\Gamma(K)$ is a 1-field that contains a subset 
    $\overline 
    K=\{\overline a:a\in K\}$ and $\overline K$ is a 1-field isomorphic to $K$. Then a model of $\Gamma(K)$ is (up to a isomorphism) a 
    1-field 
    containing $K$.
    
    Now, we extend $\Gamma(K)$ to a new set of axioms $\tilde\Gamma(K)$ adding
    \begin{align*}
        \tag{AC}\forall\,z_0...\forall\,z_n\,\exists\,x[0\in z_0+z_1x+...+z_{n-1}x^{n-1}+x^n],\ n\ge1.
    \end{align*}
    and because the counter Example \ref{exluc}, we add also the family of axioms
    
    $$\exists z_0 ...\exists z_{n-1} \bigvee\limits_{i<j<n}[z_i\ne z_j], \ n\geq 2.$$

    A model $F$ of $\Gamma(K)$ is also a model of $\tilde\Gamma(K)$ iff $F$ is infinite and algebraically closed. Our aim is to 
    describe a 
    quantifier elimination procedure for $\tilde\Gamma(F)$. By the reduction Lemma \ref{reduc}, $F$ regards every atomic formula as 
    equivalent modulo 
    $\Gamma(K)$ to a polynomial ``equation'' $0\in f(X_1,...,X_n)$.
    
    Since $K[\bar{X}]$ is a  2-domain (by an iteration of Theorem \ref{teo Rx}), a conjunction of inequations 
    $$\bigwedge\limits^m_{i=1}[0\neq g_i(\bar{X})]$$
    is equivalent to the ``inequation" $0\notin g_1(\bar{X})...g_n(\bar{ X})$. Then, to obtain a quantifier elimination for 
    $\tilde\Gamma(K)$ is 
    sufficient eliminate $Y$ from the formula
    \begin{align}\label{qe1}
        \exists\,Y[0\in f_1(\bar{ X},Y)\wedge...\wedge0\in f_m(\bar{ X},Y)\wedge 0\notin g(\bar{X},Y)]
    \end{align}
    with $f_1,...,f_m,g\in R[X_1,...,X_m,Y]$.
    
    \begin{teo}[Quantifier Elimination Procedure]\label{quantfield}
        Let $K$ be an infinite  1-field and $\varphi(X_1,...,X_n,Y)$ the formula in \ref{qe1}. Then $\varphi(X_1,...,X_n,Y)$ is 
        equivalent modulo $\tilde\Gamma(R)$ to a boolean combination of  atomic formulas 
        $\psi(X_1,...,X_r)$, $r\ge n$.
    \end{teo}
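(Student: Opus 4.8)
The plan is to eliminate the quantifier $\exists Y$ from the formula \ref{qe1} directly, regarding $f_1,\dots,f_m,g$ as polynomials in the single variable $Y$ whose coefficients are polynomial terms in $\bar X=(X_1,\dots,X_n)$, i.e.\ elements of the 2-domain $K[\bar X]$ (which is a 2-domain by iterating Theorem~\ref{teo Rx}). The whole argument takes place in an arbitrary model $F\models\tilde\Gamma(K)$ with $\bar X$ interpreted by parameters; since $F$ is a 1-field, once the relevant leading coefficients are known to be nonzero they are invertible, so Euclidean division (Theorem~\ref{euclid}) is available inside $F[Y]$. To keep the bookkeeping uniform in $\bar X$ I would, at each step, case-split on whether a leading coefficient $c(\bar X)\in K[\bar X]$ vanishes; each such split is recorded as an atomic formula $0=_s c(\bar X)$ or its negation, and these are exactly the atomic formulas that assemble into the final boolean combination $\psi(X_1,\dots,X_r)$, the freedom $r\ge n$ absorbing the finitely many coefficient conditions produced.

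The core is an induction on the degrees in $Y$ of $f_1,\dots,f_m$. First I would dispose of the case where every $f_i$ has degree $0$ in $Y$: then the conditions $0\in f_i$ are already quantifier-free in $\bar X$, and only $\exists Y[0\notin g(\bar X,Y)]$ remains, handled in the existence step below. Otherwise pick some $f_i$, say $f_1$, of least positive degree $d$ in $Y$; after splitting on its leading coefficient $c$ and, on the branch $c\ne 0$, replacing $f_1$ by its monic associate $c^{-1}f_1$, I would divide every other $f_j$ (and $g$) by $f_1$ via Theorem~\ref{euclid}, obtaining $f_j\in q_jf_1+r_j$ with $\deg_Y r_j<d$ or $r_j=0$. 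The key reduction is to show that, on the set $\{\,\alpha:0\in f_1(\alpha)\,\}$, the relation $0\in f_j(\alpha)$ is equivalent to $0\in r_j(\alpha)$; granting this, each $f_j$ may be replaced by $r_j$, the total degree drops, and the induction hypothesis applies. On the branch $c=0$ one simply deletes the top term of $f_1$, again lowering degree.

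For the existence step I would use the axiom \textup{(AC)} together with the infiniteness axioms. Once the system has been reduced so that the common-root condition is governed by a single polynomial of positive effective degree, (AC) (after making it monic) produces a root $\alpha$; to also satisfy $0\notin g(\bar X,\alpha)$ I would argue that the ``bad set'' $\{\beta: 0\in g(\bar X,\beta)\}$ is finite, bounded by $\deg_Y g$ through a root-counting consequence of Theorem~\ref{euclid}, while the available roots, together with the fact that $F$ is infinite, leave room to avoid it; when all $f_i$ are constant in $Y$ the same finiteness lets infiniteness of $F$ supply a witness $Y$ with $0\notin g(\bar X,Y)$ directly. This is precisely where the extra axioms beyond (AC) are indispensable, ruling out anomalies such as the finite algebraically closed 1-field of Example~\ref{exluc}, where $0\in g(\beta)$ can hold for every $\beta$.

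I expect the main obstacle to be the interaction of set-valued operations with division and evaluation. Euclidean division yields non-unique $q_j,r_j$ (as noted in the Remark after Theorem~\ref{euclid}), and the evaluation $\bm a(\alpha)$ is itself a subset of $F$, so the clean classical equivalence ``$f_1(\alpha)=0\Rightarrow(f_j(\alpha)=0\Leftrightarrow r_j(\alpha)=0)$'' survives only as containments. The delicate point will be to prove the biconditional $[\,0\in f_1(\alpha)\wedge 0\in f_j(\alpha)\,]\leftrightarrow[\,0\in f_1(\alpha)\wedge 0\in r_j(\alpha)\,]$ soundly in both directions, which I would do by unwinding $f_j\in q_jf_1+r_j$ through axiom \textbf{M1} of multigroups and the weak/semi distributive law, exactly as these were used in the Reduction Lemma~\ref{reduc}. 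Controlling this multivalued bookkeeping, rather than the degree induction itself, is where the real work lies.
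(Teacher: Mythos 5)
Your overall strategy (induction on the $Y$-degrees via Euclidean division, case-splitting on which coefficient of a polynomial in $Y$ is the effective leading one, and invoking (AC) plus the infiniteness axioms at the end) is the same as the paper's, and your degree-reduction step matches the paper's Part A. But your existence step for $\exists Y[0\in p(\bar X,Y)\wedge 0\notin g(\bar X,Y)]$ is unsound, and this is exactly where the paper does something you omit. You propose to take a root $\alpha$ of $p$ supplied by (AC) and then avoid the ``bad set'' $\{\beta:0\in g(\bar x,\beta)\}$ using its finiteness together with the infiniteness of $F$. This cannot work: the witness must be chosen among the roots of $p(\bar x,Y)$, a set that does not grow with $F$ and may be wholly contained in the bad set (take $p=g=Y$: the bad set is $\{0\}$, finite, $F$ is infinite, yet no root of $p$ avoids it), so infiniteness of $F$ is irrelevant once $m\ge1$. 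Moreover the finiteness claim itself --- that $\{\beta:0\in g(\bar x,\beta)\}$ has at most $\deg_Y g$ elements ``as a consequence of Theorem \ref{euclid}'' --- does not follow: the classical root bound needs multiplicativity of evaluation and the factor theorem, both of which survive only as one-sided containments here, and a polynomial over a 1-field can have more roots than its degree (over the triangle multifield $\mathcal T\mathbb R_+$ of Example \ref{ex:1.3}, $0\in 1+\alpha+\alpha^2$ holds exactly when $|1-\alpha|\le\alpha^2\le 1+\alpha$, an infinite interval of $\alpha$'s). The paper's Part B replaces your step by the divisibility criterion: over an algebraically closed $F$ the formula holds iff $p(\bar x,Y)$ does not divide $g(\bar x,Y)^k$ in $F[Y]$, and that divisibility is converted into quantifier-free conditions by the same pseudo-division (\ref{qe3}) used in Part A, yielding a disjunction of statements with $m=0$. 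That idea is absent from your proposal and is not recoverable from what you wrote.

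A smaller but real defect: you normalize $f_1$ to a ``monic associate $c^{-1}f_1$'' where $c=c(\bar X)\in K[\bar X]$. That inverse exists only after substituting values for $\bar X$ in a model, so the resulting quotients and remainders have coefficients that are not polynomial terms over $K$ and cannot be reassembled into an $\mathcal L(K)$-formula in $\bar X$. The paper avoids this by pseudo-division, proving $a_j(\bar X)^d q\subseteq q_jp_j+r_j$ entirely inside $K[\bar X,Y]$; you need the same device. You do correctly flag the multivalued bookkeeping around $f_j\in q_jf_1+r_j$ (one-sided containments, non-unique remainders) as the delicate point --- the paper also leaves that verification implicit --- but resolving it would not repair the existence step above.
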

    \begin{proof}
        The proof consists in three parts:
        
        $ $
        
        {\bf A -} Reduction to the case that only one of $f_1,...,f_m$ involves $Y$.
        
        Move each conjunction that appears in (\ref{qe1}) and that does not involve $Y$ to the left of $\exists\,Y$ according to the rule 
        ``$\exists Y[\varphi\wedge\psi]\equiv\varphi\wedge\exists Y[\psi]$ if $Y$ does not appear in $\varphi$''. Thus we assume 
        $\deg_Y(f_i(\bar{X},Y))\ge1$, $i=1,...,m$ and $m\ge2$.
        
        We now perform an induction on $\sum\deg_Y(f_i(\bar{X},Y))$:
        
        Let $p(\bar{X},Y)$ and $q(\bar{X},Y)$ be multipolynomials with coefficients in $R$ such that $0\le\deg_Y p(\bar{ 
            X},Y)\le\deg_Yq(\bar{
            X},Y)=d$. Write $p(\bar{X},Y)$ in the form
        \begin{align}\label{qe2}
            p(\bar{X},Y)=a_k(\bar{ X})Y^k+a_{k-1}(\bar{ X})Y^{k-1}+...+a_0(\bar{X})
        \end{align}
        with $a_j\in R[\bar{X}]$. For each $j$ with $0\le j\le k$ let 
        \begin{align*}
            p_j(\bar{X},Y)=a_j(\bar{ X})Y^j+a_{j-1}(\bar{ X})Y^{j-1}+...+a_0(\bar{X})
        \end{align*} 
        If $0\notin a_j(\bar{X})$, division of $q(\bar{X},Y)$ by $p_j(\bar{X},Y)$ produces $q_j(\bar{X},Y)$ and $r_j(\bar{X},Y)$ in 
        $R[\bar{X},Y]$ 
        for which
        \begin{align}\label{qe3}
            a_j(\bar{X})^dq(\bar{X},Y)\subseteq q_j(\bar{X},Y)p_j(\bar{X},Y)+r_j(\bar{X},Y),
        \end{align}
        and $\deg_Y(r_j)<\deg_Y(p_j)\le d$.
        
        Let $F$ be a model of $\Gamma(K)$. If $x_1,...,x_n,y$ are elements  of $F$ such that $0\in a_l(\bar{x})$ for $l=j+1,...,k$ and 
        $0\notin a_j(\bar{x})$, then $[0\in p(\bar{x},y)\wedge0\in q(\bar{x},y)]$ is equivalent in $F$ to $[0\in p_j(\bar{x},y)\wedge0\in 
        r_j(\bar{x},y)]$. Therefore, the formula $[0\in p(\bar{X},Y)\wedge0\in q(\bar{X},Y)]$ is equivalent modulo $\Gamma(K)$ to the 
        formula
        \begin{align}\label{qe4}
            \left(\bigvee\limits^k_{j=0}[0\in a_k(\bar{X})\wedge...\wedge0\in a_{j+1}(\bar{X})\wedge0\notin a_j(\bar{X})\wedge
            0\in p_j(\bar{X},Y)\wedge0\in r_j(\bar{X},Y)]\right) \nonumber \\
            \vee[0\in a_k(\bar{X})\wedge...\wedge0\in a_0(\bar{X})\wedge0\in q(\bar{X},Y)].
        \end{align}
        Apply the outcome of (\ref{qe4}) to $f_1(\bar{X},Y)$ and $f_m(\bar{X},Y)$ (of \ref{qe1}). With the rule ``
        $\exists\,Y[\varphi\vee\psi]\equiv\exists\,Y\varphi\vee\exists\,Y\psi$'' we have replaced (\ref{qe1}) by disjunction of statements 
        of form (\ref{qe1}) in each which the sum corresponding to $\sum\deg_Y(f_i(\bar{X},Y))$ is smaller. Using the induction assumption, 
        we conclude that $m$ may be taken to be at most $1$.
        
        $ $
        
        {\bf B -} Reduction to the case that $m=0$.
        
        Continue the notation of part $A$ which left us at the point  of considering how to eliminate $Y$ from $p(\bar{X},Y)$ in
        \begin{align}\label{qe5}
            \exists\,Y[0\in p(\bar{X},Y)\wedge0\notin g(\bar{X},Y)].
        \end{align}
        Consider a model $F$ of $\tilde\Gamma(K)$ and elements $x_1,...,x_n\in F$. If $0\notin p(\bar{x},Y)$ then (since $
        F$ is algebraically closed) the statement 
        $$F\models\exists\,Y[0\in p(\bar{x},Y)\wedge0\notin g(\bar{x},Y)]$$
        is equivalent to the statement
        $$p(\bar{x},Y)\mbox{ does not divide }g(\bar{x},Y)^k\mbox{ in } F[X].$$
        Therefore, with $q(\bar{X},Y)=g(\bar{X},Y)^k$ and in the notation of (\ref{qe2}) and (\ref{qe3}), formula (\ref{qe5}) is equivalent 
        modulo 
        $\tilde\Gamma(K)$ to the formula
        \begin{align*}
            \left(\bigvee\limits^k_{j=0}[0\in a_k(\bar{X})\wedge...\wedge0\in a_{j+1}(\bar{X})\wedge0\notin a_j(\bar{X})\wedge
            \exists Y[\in r_j(\bar{X},Y)]]\right) \nonumber \\
            \vee[0\in a_k(\bar{X})\wedge...\wedge0\in a_0(\bar{X})\wedge\exists Y[0\in g(\bar{X},Y)]]
        \end{align*}
        a disjunction of statements of form (\ref{qe1}) with $m=0$.
        
        $ $
        
        {\bf C -} Completion of the proof.
        
        By part B we are in the point of removing $Y$ from a statement of the form
        $$\exists\,Y[0\notin a_l(\bar{X})Y^l+a_{l-1}(\bar{X})Y^{l-1}+...+a_0(\bar{X})].$$
        Since models of $\tilde\Gamma(K)$ are infinite 1-fields, this formula is equivalent modulo $\tilde\Gamma(K)$ to
        $$0\notin a_l(\bar{X})\vee...\vee0\notin a_0(\bar{X}),$$
        completing the quantifier elimination procedure.
    \end{proof}
    
    \begin{rem}
        $ $
        \begin{enumerate}[i-]
            \item The previous result subsumes the usual one, i.e., if $K$ is a ordinary algebraically closed field then it is an infinite 
            algebraically closed (strict) 1-field and Theorem \ref{quantfield} is just the usual quantifier elimination result.
            
            \item In general, when the translate the Theorem \ref{quantfield} to a result on first-order relational structures (see Remark 
            \ref{translation-rem}) we get that the theory of algebraically closed 1-fields perceives that some formulas are always equivalent 
            to some  $\forall \exists$-formulas: but this results seems not so ease to have a previous and direct intuition (and consequent 
            proof), i.e.  without the use of the language of multialgebras and its results.
        \end{enumerate}
    \end{rem}
    
    \section{Final remarks and future works}
    
    We finish the work presenting here some possible future developments.
    
    \begin{itemize}
        \item It could be interesting describe and explore an alternative notion of algebraically closed multifield based on an 
        alternative notion of   of root of a polynomial, taking in account factorizations, for example, if $p(x)\in(x-b)q(x)$ for some 
        $q(x)$, then $b$ can be seem as a root of $p(x)$: by Theorem 7 in \cite{ameri2019superring}, this in fact {\em coincide} with the 
        other notion of root of a  polynomial $p(x) \in F[x]$ whenever $F$ is a hyperfield.
        \item It is true that any full 1-field has a kind of algebraic closure?
        \item In what sense the theory of algebraically closed 1-fields could be considered a model completion of the 1-field theory?
        \item We gave a first step in model theory of 1-fields. It will be interesting consider,  in the same line, the model theory of 
        1-fields endowed with some extra structure: orderings (\cite{marshall2006real}, valuations (\cite{jun2018valuations}, etc.
        \item The Theorem \ref{quantfield} could be adapted for real closed and henselian 1-fields?
        \item In \cite{pelea2006multialgebras}  was started the development of a identity theory and a universal algebra like theory 
        for multi structures. However,  a full model theory of multi structures, in the vein of chapter 1 of 
        \cite{diaconescu2008institution}, should be an object of interest (as the present work suggests) and it is seems to be unknown.
    \end{itemize}
    
    \bibliographystyle{plain} 
    \bibliography{one_for_almost_all.bib} 
    
\end{document}